\numberwithin{equation}{section}
\renewcommand{\bibsection}{\@bibtitlestyle}
\def\R{\mathbb{R}}
\def\E{\mathbb{E}}
\def\tE{\tilde{\mathbb{E}}}
\def\eps{\varepsilon}
\DeclareMathOperator{\tr}{tr}
\DeclareMathOperator{\sign}{sign}
\DeclareMathOperator{\rank}{rank}
\DeclareMathOperator{\Rem}{Rem}
\DeclareMathOperator{\polylog}{polylog}
\DeclareMathOperator{\loo}{LOO}
\DeclareMathOperator{\alo}{ALO}
\DeclareMathOperator{\diag}{diag}
\DeclareMathOperator*{\argmin}{arg\,min}
\DeclareMathOperator{\Err}{Err}
\declaretheorem[name=Theorem,numberwithin=section]{theorem}
\declaretheorem[name=Proposition,sibling=theorem]{proposition}
\declaretheorem[name=Corollary,sibling=theorem]{corollary}
\declaretheorem[name=Example,style=remark,numberwithin=section]{example}
\crefname{assumption}{assumption}{assumptions}
\begin{document}
\title[Simultaneous analysis of ALO-CV and mean-field inference]{Simultaneous analysis of approximate leave-one-out cross-validation
and mean-field inference}
\author{Pierre C. Bellec $^\dagger$}
\thanks{$^\dagger$:
Rutgers University.
Email:
\href{mailto:pierre.bellec@rutgers.edu}{pierre.bellec@rutgers.edu}}

\begin{abstract}
Approximate Leave-One-Out Cross-Validation (ALO-CV) is a method that has
been proposed to estimate the generalization error of a regularized
estimator in the high-dimensional regime where dimension and sample size
are of the same order, the so-called ``proportional regime''.  A new
analysis is developed to derive the consistency of ALO-CV for
non-differentiable regularizers under Gaussian covariates and
strong convexity.  Using a conditioning argument, the
difference between the ALO-CV weights and their counterparts in mean-field
inference is shown to be small.  Combined with upper bounds between the
mean-field inference estimate and the leave-one-out quantity, this
provides a proof that ALO-CV approximates the leave-one-out quantity up to negligible error terms.  Linear models with square loss, robust
linear regression and single-index models are explicitly treated.
\end{abstract}
\maketitle

\tableofcontents

\section{Introduction to ALO-CV 
and mean-field inference}
Consider iid observations $(x_i,y_i)_{i=1,...,n}$
with $x_i\in \R^p$ and $y_i$ valued in some fixed set $\mathcal Y$.
For a test function of interest $g:\R\times \mathcal Y \to \R$,
we are interested in the generalization error with respect to
$g$, namely
$$
\Err_g(b)
= \E\Bigl[
g(b^Tx_{new}, ~ y_{new})
\Bigr],
$$
for any candidate vector $b\in\R^p$,
where $(x_{new},y_{new})$ is an independent copy of $(x_i,y_i)$.
The goal of the present paper is to analyze simultaneously
two statistical methods to estimate the generalization error
of an estimate
\begin{equation}
    \label{hat_b}
\hat b = \argmin_{b\in \R^p}
\sum_{i=1}^n L_{y_i}(x_i^Tb)
+ R(b)
\end{equation}
for convex loss functions $L_{y_i}: \R \to \R$
and a convex penalty $R:\R^p\to \R$.
In this case, the generalization error can be written as
the conditional expectation
\begin{equation}
    \label{err_b_hat}
\Err_g(\hat b) 
= \E\Bigl[
g(\hat b^Tx_{new}, ~ y_{new})
\mid
(x_i,y_i)_{i=1,...,n}
\Bigr].
\end{equation}

Above and throughout the paper, $n$ is the sample size
and $p$ the dimension. The ratio $p/n$ is kept of constant
order as $n,p\to+\infty$, with
\begin{equation}
    \label{proportional_regime}
\delta \le \frac p n  \le \delta^{-1}
\end{equation}
for some constant $\delta\in (0,1)$.
This is commonly referred to as the proportional regime,
or proportional asymptotics,
which has been the focus of much recent research in high-dimensional
statistics and optimization. We refer
to \cite{montanari2018mean} for an introduction to the topic.
Let us survey two methods that have been proposed
to estimate
\eqref{err_b_hat} in the regime \eqref{proportional_regime}.

\subsection{Approximate leave-one-out cross-validation}

A recent series of works 
\cite{rad2018scalable,
xu2019consistent,
wang2018approximate,
rad2020error,
auddy2024approximate,
zou2024theoretical}
have proposed and analyzed
the Approximate Leave-One-Out Cross-Validation (ALO-CV)
method, which we now review. The starting point is the
leave-one-out estimate
\begin{equation}
b^i = \argmin_{b\in \R^p}
\sum_{l\in [n] \setminus \{i\}}
L_{y_l} (x_l^T b) + R(b),
\label{min_bi}
\end{equation}
where the $i$-th observation is left out compared to \eqref{hat_b}.
The leave-one-out estimate of \eqref{err_b_hat} is then given by
\begin{equation}
    \label{loo}
    \loo
    \coloneq
\frac 1 n \sum_{i=1}^n g(x_i^T b^i, y_i).
\end{equation}
Error bounds of the estimation error of \eqref{err_b_hat}
by the leave-one-out estimate \eqref{loo}
and consistency are obtained
in \cite{xu2019consistent,zou2024theoretical}.
However, the leave-one-out estimate \eqref{loo}
requires one to solve the minimization problem $n$ times.
ALO-CV attempts to approximate \eqref{loo}
while solving the minimization problem only once
on the full dataset.
To do so, the ALO-CV estimate \cite{rad2018scalable} is constructed
by performing one Newton step, starting from $\hat b$,
to minimize \eqref{min_bi}. The resulting approximation of $b^i$
is
\begin{equation}
    \label{newton}
\tilde b_{\textsc{newton}}^i
= \hat b + \Bigl(\sum_{k\in[n]\setminus\{i\}}x_k L_{y_k}''(x_k^T\hat b) x_k^T + \nabla^2 R(\hat b)\Bigr)^{-1} x_i L_{y_i}'(x_i^T\hat b)
\end{equation}
which is well-defined as long as $L_{y_k}$ is twice differentiable
with second derivative $L_{y_k}''$
and the Hessian $\nabla^2 R$ exists and is positive-definite
at $\hat b$.
Replacing $b^i$ by $\tilde b_{\textsc{newton}}^i$ in the leave-one-out estimate
\eqref{loo}, and using the Sherman-Morrison-Woodbury matrix identity
to obtain
$$
x_i^T
\Bigl(\sum_{k\in[n]\setminus\{i\}}x_k L_{y_k}''(x_k^T\hat b) + \nabla^2 R(\hat b)\Bigr)^{-1} x_i
=
\frac{x_i^T \hat A x_i}{1 - x_i^T \hat A x_i L_{y_i}''(x_i^T\hat b)}
$$
where $\hat A = (\sum_{i=1}^n x_i L_{y_i}''(x_i^T\hat b) x_i^T + \nabla^2 R(\hat b))^{-1}$. This leads to the ALO-CV estimate of \eqref{err_b_hat} given by
\begin{equation}
    \label{alo}
    \alo
    \coloneq
\frac1n
\sum_{i=1}^n g\Bigl(x_i^T \hat b + L_{y_i}'(x_i^T\hat b)
\frac{x_i^T \hat A x_i}{1 - x_i^T \hat A x_i L_{y_i}''(x_i^T\hat b)}
,~ y_i\Bigr).
\end{equation}
Each step above is detailed in \cite{rad2018scalable,auddy2024approximate}.
This construction has been extended to some regularizers that are not
differentiable such as $R(b) = \lambda \|b\|_1$
\cite[\S2.2]{wang2018approximate,wang2018approximate_fast_tuning,auddy2024approximate,rad2018scalable}.

ALO-CV is appealing because its construction is simply based on
the leave-one-out estimate and the above Newton step,
with no assumption on the underlying data-generating process.
It is computationally efficient compared to the leave-one-out estimate
\eqref{loo}. It does not suffer from the sample size bias
seen when using 5- or 10-fold cross-validation to approximate
\eqref{err_b_hat} \cite[Figure 1]{xu2019consistent,rad2018scalable}:
Estimating \eqref{err_b_hat} by 5-fold cross-validation
produces approximations of the generalization error of an estimator trained
on $4/5$ of the data, which is not the same as the generalization
error of \eqref{hat_b} trained on the full dataset.

Proving the success of ALO-CV for estimating
\eqref{loo} or \eqref{err_b_hat}, for instance consistency,
was initially obtained 
for Gaussian designs under differentiability and smoothness conditions
on the loss functions $L_{y_i}$ and the penalty $R$
\cite{rad2018scalable,xu2019consistent}.
The differentiability and smoothness conditions were recently
relaxed to allow for the elastic-net penalty
under stability conditions on the supports $S^i = \{j\in[p]: b^i_j \ne 0\}$
of \eqref{min_bi}
\cite[Theorem 3.2]{auddy2024approximate},
and such stability condition is shown to hold for the square loss
$L_{y_i}(t)=(y_i-t)^2/2$ and the elastic-net penalty
under isotropic Gaussian $x_i$
\cite[\S4]{auddy2024approximate}.
The work \cite{auddy2024approximate} leverages these support stability
results and smoothing arguments to prove consistency of ALO-CV.

This paper presents a completely different argument, without
relying on smoothing arguments or support stability conditions.
The main contribution of the paper is a simple argument to prove
the consistency of ALO-CV for non-smooth regularizers under Gaussian
design, by directly relating the ALO-CV estimate \eqref{alo}
to results obtained in mean-field asymptotics that we now describe.

\subsection{Mean-field asymptotics}
The last decade has seen the development of mean-field asymptotics,
which aims to study the asymptotic behavior of high-dimensional
estimators such as \eqref{hat_b}
in the proportional regime \eqref{proportional_regime},
see for instance
\cite{donoho2009message,bayati2012lasso,el_karoui2013robust,stojnic2013framework,donoho2016high,el_karoui2018impact,thrampoulidis2018precise,miolane2018distribution,celentano2020lasso,loureiro2021capturing}
and \cite{montanari2018mean} for an introduction to the topic.
The picture that emerges is that the behavior of the empirical
distribution of $\hat b$ or of $(x_i^T\hat b)_{i\in[n]}$ can 
be precisely described by a few deterministic scalars
that are the solution of a system of nonlinear equations.
The precise definition of these deterministic quantities is not
relevant for the purpose of the present paper and the aforementioned
papers provide numerous examples.
For the purpose of the present paper, let us only describe 
one convergence result that is typical of mean-field asymptotics:
If $x_i\sim N(0, I_p)$,
if the test function $g$ satisfies some regularity conditions,
and if $y_i\mid x_i$ follows a single-index model in the sense
that $y_i= F(x_i^Tw, E_i)$ for some deterministic $F(\cdot,\cdot)$ and
$w\in\R^p$ with $\|w\|_2=1$ as well as some
external randomness $E_i$ independent of $x_i$,
then
\begin{equation}
    \label{mean_field}
\frac 1 n
\sum_{i=1}^n g\Bigl(
x_i^T\hat b + \gamma_* L_{y_i}'(x_i^T\hat b),
y_i\Bigr)
\to^P
\E\Bigl[
g(a_*U + \sigma_*G,y)
\Bigr]
\end{equation}
where $(\gamma_*,a_*,\sigma_*)$ are deterministic scalars found
by solving the aforementioned system of equations,
and in the right-hand side $(U,y)$ is equal in distribution
to $(x_i^Tw,y_i)$ and $G\sim N(0,1)$ is independent of $(U,y)$.
See for instance \cite[Theorem 2]{loureiro2021capturing}.
Such approximation could be the starting point in deriving
estimates of the generalization error \eqref{err_b_hat},
although in practice $(\gamma_*,a_*,\sigma_*)$ are unknown:
they need to be estimated from the data, or at the very least
approximations such as \eqref{mean_field} need to hold
with $\gamma_*$ replaced by an observable quantity $\hat \gamma$.
Estimating $(\gamma_*,a_*,\sigma_*)$ with observable quantities
has been the focus of \cite{bellec2022observable} by the author,
where observable quantities that estimate $(\gamma_*,a_*,\sigma_*)$ are
developed and referred to as observable adjustment.
We describe in the next subsection
a version of \eqref{mean_field} where $\gamma_*$ in the left-hand side 
is replaced by an observable quantity $\hat \gamma$,
which is a key intermediate
ingredient for the analysis of ALO-CV of the present paper.

\subsection{Mean-field inference}

One family of results in mean-field asymptotics focuses on
inference regarding the components $\beta_j^*$ of a
ground-truth $\beta^*$ in regression models,
or of the components $w_j$ of a ground-truth $w$ in single-index models.
This is referred to as de-biasing, where the initial
estimate $\hat b_j$ provided by the $j$-th component of \eqref{hat_b}
requires a bias correction after which the de-biased estimate
is approximately normal and centered at $\beta_j^*$ in regression
\cite{bellec_zhang2019debiasing_adjust,bellec_zhang2019second_poincare,celentano2020lasso},
or at a shrunk version of $w_j$ in single index models
\cite[Section 4.1]{bellec2022observable}.

Another family of results useful for inference lets us
construct an estimate of the generalization error \eqref{err_b_hat}.
The principle here is similar to de-biasing:
for a given coordinate $i\in[n]$,
start with the initial value $x_i^T\hat b$ and find
a de-biasing correction that makes 
$x_i^T\hat b + \textsf{correction}$ approximately normal,
see for instance \cite[Theorem 4.3]{bellec2022observable}.
Let us state formally one version of such results that will
be useful for the present paper.

We will use the notation $X\in\R^{n\times p}$
for the matrix with rows $x_1,...,x_n$ and
\begin{equation}
    \label{D}
D=\diag(L_{y_i}''(x_i^T\hat b)_{i\in[n]})\in\R^{n\times n}.
\end{equation}
Furthermore, $e_i\in\R^n$ is the $i$-th canonical basis
vector in $\R^n$, and $e_j\in\R^p$ the $j$-th canonical
basis vector in $\R^p$.
We say that the regularizer is $(n\mu,\Sigma)$-strongly convex
if
$b\mapsto R(b) - \mu n \|\Sigma^{1/2}b\|_2^2/2$ is convex, or equivalently
\begin{equation}
\inf_{u\in\partial R(b)}
\inf_{u'\in\partial R(b')}
(b-b')^T (u-u')
\ge n\mu \|\Sigma^{1/2}(b-b')\|_2^2,
\label{mu-n-strongly-convex}
\end{equation}

\begin{proposition}
    \label{prop:mean_field}
    Consider iid $(x_i,y_i)_{i\in[n]}$ with
    $x_i\sim N(0, \Sigma)$, and $w\in\R^p$ with $\E[(w^Tx_i)^2]\in\{0, 1\}$
    such that
    $(I_p - \Sigma ww^T)x_i$ is independent of $(x_i^Tw, y_i)$.
    Assume that for all values of $y\in\mathcal Y$,
    the loss $L_y(\cdot)$ is differentiable and 1-Lipschitz,
    with $L_y'(\cdot)$ being 1-Lipschitz.
    Assume that the regularizer $R$ is $(n\mu,\Sigma)$-strongly convex in the sense of \eqref{mu-n-strongly-convex}
    and that $R$ is minimized at 0.
    Then for fixed values of $(y_i)_{i\in[n]}$ the mapping
    $$
    (x_{ij})_{i\in[n],j\in[p]} \mapsto
    \hat b
    $$
    is almost everywhere differentiable
    with $\frac{\partial}{\partial x_{ij}}\hat b
    =
    \hat A(-e_j L_{y_i}'(x_i^T\hat b) - X^TDe_i \hat b_j)$
    for some invertible matrix $\hat A\in\R^{p\times p}$
    with 
    \begin{equation}
        \label{bound-A}
        \|\Sigma^{1/2}\hat A\Sigma^{1/2}\|_{op} \le (n\mu)^{-1}.
    \end{equation}
    Furthermore, with the leave-one-out estimate
    $b^i$ in \eqref{min_bi},
    \begin{equation}
    \sum_{i=1}^n
    \E\Bigl[
    \Bigl(
    x_i^T\hat b + \tr[\Sigma\hat A] L_{y_i}'(x_i^T\hat b)
    -
    x_i^Tb^i
    \Bigr)^2
    \Bigr]
    \le
    C(\mu,\delta)
    \label{desired}
    \end{equation}
    for some constant depending on $\mu,\delta$ only
    provided that $p/n$ satisfies \eqref{proportional_regime}.
    If additionally $g$ satisfies
    $\sup_{y\in\mathcal Y}|g(x,y) - g(x,y)|\le |x-x'|(1+|x|+|x'|)$
    then
    \begin{equation}
        \label{bound_mean_field_LOO}
    \E
    \Big|
    \loo
    -
    \frac1n
    \sum_{i=1}^n
    g\Bigl(
    x_i^T\hat b + \tr[\Sigma\hat A] L_{y_i}'(x_i^T\hat b),
    ~
    y_i
    \Bigr)
    \Big|
    \le \frac{C(\mu,\delta)}{\sqrt n}.
    \end{equation}
\end{proposition}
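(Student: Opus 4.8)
The plan is to treat the three assertions in order.

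\textbf{Derivative formula.} When all $L_y$ and $R$ are twice differentiable, the stationarity condition $\sum_{k=1}^n x_kL_{y_k}'(x_k^T\hat b)+\nabla R(\hat b)=0$ can be differentiated in $x_{ij}$ by the implicit function theorem: the Jacobian in $b$ is $\hat A^{-1}:=X^TDX+\nabla^2R(\hat b)$ with $D$ as in \eqref{D}, which is invertible and satisfies $\hat A^{-1}\succeq n\mu\Sigma$ by \eqref{mu-n-strongly-convex} (whence \eqref{bound-A}), and differentiating the explicit occurrences of $x_{ij}$ produces exactly $\partial\hat b/\partial x_{ij}=\hat A(-e_jL_{y_i}'(x_i^T\hat b)-X^TDe_i\hat b_j)$. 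For non-smooth $R,L_y'$ one runs this argument on the Moreau--Yosida regularizations (which preserve the $1$-Lipschitz and $(n\mu,\Sigma)$-strong-convexity properties) and passes to the limit, obtaining a.e.\ differentiability with ``some'' invertible $\hat A$ obeying \eqref{bound-A}. Along the way one records the a priori bounds, all consequences of \eqref{mu-n-strongly-convex} and the optimality conditions: $\|\Sigma^{1/2}\hat b\|_2,\|\Sigma^{1/2}b^i\|_2=O(1/\mu)$ in $L^2$; $0\le\tr[\Sigma\hat A]\le p/(n\mu)$; and $\|\Sigma^{1/2}(\hat b-b^i)\|_2\le\|\Sigma^{-1/2}x_i\|_2/(n\mu)$ (subtract the two optimality conditions, pair with $\hat b-b^i$, and use convexity of the $L_{y_k}$ and \eqref{mu-n-strongly-convex}).

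\textbf{Reduction of \eqref{desired}.} Subtract the optimality condition for $b^i$ from that for $\hat b$; since each $L_{y_k}'$ is $1$-Lipschitz, $L_{y_k}'(x_k^T\hat b)-L_{y_k}'(x_k^Tb^i)=\bar d_k\,x_k^T(\hat b-b^i)$ with $\bar d_k\in[0,1]$, and in the smooth setting the subgradient difference is $\bar H^i(\hat b-b^i)$ with $\bar H^i\succeq n\mu\Sigma$. Solving yields the exact identity
$$
x_i^T(\hat b-b^i)=-\,L_{y_i}'(x_i^T\hat b)\,x_i^T\bar A^ix_i,
\qquad
\bar A^i:=\Bigl(\textstyle\sum_{k\ne i}\bar d_k\,x_kx_k^T+\bar H^i\Bigr)^{-1},
$$
where $\bar A^i\succeq0$, $\|\Sigma^{1/2}\bar A^i\Sigma^{1/2}\|_{op}\le(n\mu)^{-1}$, and $\bar A^i$ depends on $x_i$ only through $\hat b$. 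Hence the bracket in \eqref{desired} equals $L_{y_i}'(x_i^T\hat b)\,(\tr[\Sigma\hat A]-x_i^T\bar A^ix_i)$, so by $|L_{y_i}'|\le1$ and exchangeability of the observations the left side of \eqref{desired} is at most $n\,\E[(\tr[\Sigma\hat A]-x_1^T\bar A^1x_1)^2]$, and it suffices to show $\E[(\tr[\Sigma\hat A]-x_1^T\bar A^1x_1)^2]\le C(\mu,\delta)/n$.

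\textbf{The key bound.} Write $\tr[\Sigma\hat A]-x_1^T\bar A^1x_1=(\tr[\Sigma\bar A^1]-x_1^T\bar A^1x_1)+(\tr[\Sigma\hat A]-\tr[\Sigma\bar A^1])$ and condition on $(x_k,y_k)_{k\ne1}$ and $y_1$, so that $x_1\sim N(0,\Sigma)$ and $b^1$ is fixed. For the first term, the Gaussian Poincar\'e inequality applied to $x_1\mapsto x_1^T\bar A^1x_1$ bounds its conditional variance by $\E[\|\Sigma^{1/2}\nabla_{x_1}(x_1^T\bar A^1x_1)\|_2^2]$, and $\nabla_{x_1}(x_1^T\bar A^1x_1)=2\bar A^1x_1+(\text{a term built from }\nabla_{x_1}\hat b)$; the first piece has $\Sigma^{1/2}$-norm $\le2(n\mu)^{-1}\|\Sigma^{-1/2}x_1\|_2=O(\sqrt p/n)$ and the second is lower order via the derivative formula, so the conditional variance is $O(1/n)$; replacing $\bar A^1$ by the $x_1$-independent $\bar A^{1,0}:=(\sum_{k\ne1}L_{y_k}''(x_k^Tb^1)x_kx_k^T+\nabla^2R(b^1))^{-1}$ makes the conditional mean exactly $\tr[\Sigma\bar A^{1,0}]$ and leaves an error that is absorbed into the second term. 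The second term, together with that error, is a \emph{leave-one-out stability estimate for the effective degrees of freedom}: Woodbury removes $L_{y_1}''(x_1^T\hat b)x_1x_1^T$ from $\hat A^{-1}$ at cost $O(\tr[(\Sigma\hat A)^2])=O(p/n^2)$, and replacing $L_{y_k}''(x_k^T\hat b)$ by $\bar d_k$ (resp.\ by $L_{y_k}''(x_k^Tb^1)$) and $\nabla^2R(\hat b)$ by $\bar H^1$ (resp.\ by $\nabla^2R(b^1)$) costs $O(1/n)$ in $L^2$ because $\|\Sigma^{1/2}(\hat b-b^1)\|_2=O(1/\sqrt n)$ and because each such perturbation enters only inside a trace $\tr[\hat A\Sigma\bar A^1(\cdots)]$ of matrices of operator norm $O(1/(n\mu))$. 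Non-smoothness of $R,L_y'$ is handled by carrying out all of this for the Moreau regularizations and letting the regularization parameter vanish, with the final constant depending on $\mu,\delta$ only. \emph{This last stability estimate is the main obstacle}: the weight perturbations above are not small in operator norm (they can be $\Theta(1)$ on a sparse set of coordinates when $L_y''$ is discontinuous, as for the Huber loss), so one must genuinely exploit that they are small when paired through a trace against the $O(1/(n\mu))$-bounded matrices $\hat A,\bar A^{1,0}$, and prevent the modulus of continuity of the smoothed $L_y''$ from entering the constant.

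\textbf{From \eqref{desired} to \eqref{bound_mean_field_LOO}.} The left side of \eqref{bound_mean_field_LOO} equals $n^{-1}\sum_i[g(x_i^Tb^i,y_i)-g(x_i^T\hat b+\tr[\Sigma\hat A]L_{y_i}'(x_i^T\hat b),y_i)]$, and by the assumed modulus of continuity of $g$ each summand is at most $|\Delta_i|\,(1+|x_i^Tb^i|+|x_i^T\hat b+\tr[\Sigma\hat A]L_{y_i}'(x_i^T\hat b)|)$, $\Delta_i$ denoting the bracket in \eqref{desired}. Cauchy--Schwarz over $i$ and then in expectation, together with \eqref{desired} and the a priori bounds $\E[(x_i^Tb^i)^2]\lesssim1/\mu^2$ and $\E[(x_i^T\hat b)^2]\lesssim1/\mu^2$ (the latter from $x_i^T\hat b=x_i^Tb^i-L_{y_i}'(x_i^T\hat b)x_i^T\bar A^ix_i$ and $x_i^T\bar A^ix_i\le\|\Sigma^{-1/2}x_i\|_2^2/(n\mu)$), yields the bound $C(\mu,\delta)/\sqrt n$.
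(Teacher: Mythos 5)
Your reduction of \eqref{desired} is correct as far as it goes: subtracting the two KKT systems and using monotonicity of $L_{y_k}'$ does give the exact identity $x_i^T(\hat b-b^i)=-L_{y_i}'(x_i^T\hat b)\,x_i^T\bar A^i x_i$ (in the smooth case), so the bracket in \eqref{desired} becomes $L_{y_i}'(x_i^T\hat b)(\tr[\Sigma\hat A]-x_i^T\bar A^ix_i)$ and everything hinges on $\E[(\tr[\Sigma\hat A]-x_1^T\bar A^1x_1)^2]\le C(\mu,\delta)/n$. But the step you yourself flag as ``the main obstacle'' --- the stability estimate $\E[(\tr[\Sigma\hat A]-\tr[\Sigma\bar A^1])^2]\lesssim 1/n$, which requires controlling $\sum_{k\ne 1}(\bar d_k-L_{y_k}''(x_k^T\hat b))\,x_k^T\bar A^1\Sigma\hat A x_k$ and the analogous $\bar H^1$ versus $\nabla^2R(\hat b)$ term --- is not proved, and it is not a routine gap. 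Each trace pairing $x_k^T\bar A^1\Sigma\hat A x_k$ is only $O(1/n)$, so you need $\sum_k|\bar d_k-L_{y_k}''(x_k^T\hat b)|=o(n)$; for a loss with discontinuous second derivative (Huber) this amounts to an anti-concentration statement about how many of the margins $x_k^T\hat b$ fall within $O(|x_k^T(\hat b-b^1)|)$ of the kink, and for the elastic net the corresponding statement about $\bar H^1$ is a support-stability condition. These are precisely the arguments of \cite{xu2019consistent,auddy2024approximate} that the paper is written to avoid; ``carry it out for the Moreau regularizations and let the parameter vanish'' does not resolve this, because the modulus of continuity of the smoothed $L_y''$ (or the blow-up of $\nabla^2$ of the smoothed penalty) enters exactly at this point.

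The paper's proof of \eqref{desired} takes a route that never forms $\bar A^i$ and never compares two resolvents. Writing $P=I_p-ww^T$ and conditioning on $(x_i^Tw,y_i)$, it applies the Second Order Stein inequality \cite[(2.6)]{bellec_zhang2018second_stein} to the Gaussian vector $Px_i$ and the map $x_i\mapsto\hat b$, obtaining
$\E[B_i^2]\le\E[\|P(\hat b-b^i)\|_2^2+\sum_{j}\|\partial\hat b/\partial x_{ij}\|_2^2]$ with
$B_i=x_i^TP\hat b-\sum_j e_j^TP\,\partial\hat b/\partial x_{ij}-x_i^TPb^i$.
The divergence term is computed from the stated derivative formula, $\sum_j e_j^TP\,\partial\hat b/\partial x_{ij}=-\tr[P\hat A]L_{y_i}'(x_i^T\hat b)-\hat b^TP\hat AX^TDe_i$, so the quantity that Stein's identity produces is already (up to the explicitly bounded terms $\hat b^TP\hat AX^TDe_i$, $\tr[ww^T\hat A]L_{y_i}'$, and $U_iw^T(\hat b-b^i)$) the bracket in \eqref{desired}; the error is controlled by \eqref{loo_bound_hat_b_bi}, \eqref{bound-A} and \eqref{bound_b_hat}. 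Non-smoothness costs nothing because the derivative formula holds almost everywhere and Stein's formula requires only weak differentiability. In short: where you need a delicate resolvent/weight stability estimate, the paper replaces the unknown random quantity $x_i^T\bar A^ix_i$ by the divergence of $\hat b$ in $x_i$ and lets Stein's identity certify that the substitution is accurate in $L^2$ summed over $i$. Your treatment of the derivative formula and of the passage from \eqref{desired} to \eqref{bound_mean_field_LOO} matches the paper and is fine.
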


Since $x_i\sim N(0, I_p)$ is independent of $b^i$,
\eqref{desired} shows that $x_i^T b^i$ plus the additive correction
$\tr[\Sigma\hat A] L_{y_i}'(x_i^T\hat b)$ is approximately normal;
informally
$$
x_i^T\hat b  + \tr[\Sigma \hat A] L_{y_i}'(x_i^T\hat b)
\approx N(0,1) \|b^i\|_2 
$$
and $\|b^i\|_2$ in the right-hand side can be replaced by 
$\|\hat b\|_2$ up to a small error term by \eqref{loo_bound_hat_b_bi}
below.
The statement of \Cref{prop:mean_field} and its proof
are variants of Theorem 4.3 in \cite{bellec2022observable}
with a minor variation on how the normal random variable
is constructed. Here the standard normal random variable
$x_i^Tb^i/\|b^i\|_2$ is obtained by leaving one observation out,
in \cite[Theorem 4.3]{bellec2022observable} it is obtained
by replacing the $i$-th row of $X\in\R^{n\times p}$ by an independent copy.
Since \Cref{prop:mean_field} does not appear in the literature in the form stated,
let us provide a short proof.

\begin{proof}[Proof of \Cref{prop:mean_field}]
    Let us prove the case $\Sigma=I_p$.
    The case $\Sigma\ne I_p$ can be deduced from the case $\Sigma=I_p$
    by a change of variable as explained in
    \cite[Appendix B]{bellec2021derivatives}
    or
    \cite[Appendix C.2]{bellec2022observable}.

    The derivative formula, the existence of the matrix $\hat A$
    and the upper bound on $\|\hat A\|_{op}$ are proved
    in \cite[Proposition 3.1]{bellec2022observable}
    (or in \cite{bellec2021derivatives} in linear regression).
    Let $P=I_p - ww^T$.
    Applying the consequence
    \cite[(2.6)]{bellec_zhang2018second_stein}
    of the Second Order Stein formula
    to the Gaussian vector
    $Px_i$ conditionally on $(x_i^Tw,y_i)$ gives
    $$
    \E\Bigl[
        B_i^2
    \Bigr]
    \le
    \E\Bigl[
    \|P(\hat b - b^i)\|_2^2
    +
    \sum_{j=1}^p
    \|
    \frac{\partial \hat b}{\partial x_{ij}}
    \|_2^2
    \Bigr]
    \text{ where }
    B_i = 
        x_i^TP\hat b - \sum_{j=1}^p e_j^T P \frac{\partial\hat b}{\partial x_{ij}}
    -
    x_i^T P b^i.
    $$
    Using the derivative formula to bound the second term,
    $$
    \sum_{i=1}^n
    \sum_{j=1}^p
    \|
    \frac{\partial \hat b}{\partial x_{ij}}
    \|_2^2
    \le 2 \|\hat A\|_F^2 \sum_{i=1}^n L_{y_i}'(x_i^T\hat b)^2
    + 2 \|\hat A X^TD\|_F^2 \|\hat b\|_2^2.
    $$
    For the first term $\|P(\hat b - b^i)\|^2$, we use \cite[Lemma 3 item 4]{auddy2024approximate}\cite[Appendices 2-3]{el_karoui2018impact}:
    \begin{equation}
        \label{loo_bound_hat_b_bi}
        n\mu \|\hat b - b^i\|_2 \le \|x_i\|_2  |L_{y_i}'(x_i^T\hat b)|.
    \end{equation}
    A short proof is as follows: write the KKT conditions
    $\sum_{l=1}^n x_l L_{y_l}'(x_l^T\hat b)\in - \partial R(\hat b)$
    for $\hat b$,
    the KKT conditions
    $\sum_{l\ne i} x_l L_{y_l}'(x_l^Tb^i)\in - \partial R(b^i)$
    for $b^i$,
    and use the strong convexity property
    $(\hat b - b^i)^T(\partial R(\hat b) - \partial R(b^i)) \ge n\mu \|\hat b - b^i\|_2^2$:
    \begin{align*}
    n\mu \|\hat b - b^i\|_2^2 
    & \le 
    (\hat b - b^i)^T(\partial R(\hat b) - \partial R(b^i))
  \\&=
      (\hat b - b^i)^T\Bigl(-x_i L_{y_i}'(x_i^T\hat b)\Bigr)
      -\sum_{l\ne i}
      \Bigl(x_l^T \hat b -x_l^T b^i\Bigr)\Bigl(
      L_{y_l}'(x_l^T\hat b)
      -
      L_{y_l}'(x_l^Tb^i)
      \Bigr).
    \end{align*}
    See \cite[Section 4.2]{bellec2020out_of_sample} where similar arguments
    are used to establish smoothness properties of $\hat b$.
    Since the rightmost sum is positive by convexity of $L_{y_l}$,
    \eqref{loo_bound_hat_b_bi} holds by the Cauchy-Schwarz
    inequality.
    Following the idea in \cite[(C.3)]{bellec2020out_of_sample},
    writing the strong convexity property of $R$ at
    $\hat b$ and $0$ for the element of $\partial R(\hat b)$
    given by the KKT conditions further gives
    $
    n\mu \|\hat b - 0\|_2^2
    \le
    (\hat b - 0)^T(\partial R(\hat b) - \partial R(0))
    =
    \hat b^T \sum_{i=1}^n x_i L_{y_i}'(x_i^T \hat b)
    $
    so that for instance
    \begin{align}
        \label{bound_b_hat}
    &n\mu \|\Sigma^{1/2}\hat b\|_2 \le \|X\Sigma^{-1/2}\|_{op} \sqrt n \max_{l\in[n]} |L_{y_l}'(x_l^T\hat b)|,
    \\\text{by the same argument }
    &n\mu \|\Sigma^{1/2}b^i\|_2 \le \|X\Sigma^{-1/2}\|_{op} \sqrt n \max_{l\in[n]\setminus\{i\}} |L_{y_l}'(x_l^Tb^i)|.
        \label{bound_b_i}
    \end{align}
    Let $C_i = 
    x_i^T\hat b + \tr[\hat A] L_{y_i}'(x_i^T\hat b)
    -
    x_i^Tb^i$ as in the left-hand side of the desired result
    \eqref{desired}.
    Using the derivative formula, with $U_i = x_i^Tw$, we find
    \begin{align*}
        C_i-B_i
        &=
        U_i w^T(\hat b - b^i)
        +\tr[ww^T\hat A] L_{y_i}'(x_i^T\hat b)
        - \hat b^T P \hat A x_i L_{y_i}''(x_i^T\hat b).
    \end{align*}
    By the triangle inequality
    $\E[\sum_{i=1}^n(C_i+D_i)^2]^{1/2}
    \le
    \E[\sum_{i=1}^nC_i^2]^{1/2}
    +
    \E[\sum_{i=1}^n D_i^2]^{1/2}$
    several times, we find
    that $\E[\sum_{i=1}^n C_i^2]^{1/2}$ is bounded from above by
    $$
    \E\Bigl[\sum_{i=1}^n B_i^2\Bigr]^{\frac12}
    +
    \E\Bigl[\sum_{i=1}^n (U_iw^T(\hat b - b^i))^2\Bigr]^{\frac12}
    +
    \E\Bigl[\|\hat A\|_{op}^2\sum_{i=1}^n L_{y_i}'(x_i^T\hat b)^2\Bigr]^{\frac12}
    +
    \E\Bigl[
    \|\hat b^TP\hat AX^TD\|_F^2
    \Bigr]^{\frac12}.
    $$
    The claim \eqref{desired} is proved by
    combining these upper bounds with
    \eqref{bound-A}, \eqref{loo_bound_hat_b_bi}, \eqref{bound_b_hat},
    inequalities
    $|L_{y_i}'(x_i^T\hat b)| \le 1$
    and $|L_{y_i}''(x_i^T\hat b)| \le 1$ (so that $\|D\|_{op}\le 1$)
    by assumption on $L_{y_i}$, the fact that
    $\E[\|X\|_{op}^2]\le(\sqrt n + \sqrt p)^2+1$
    thanks to
    \cite[Theorem II.13]{DavidsonS01},
    and the inequality
    $\E[\sum_{i=1}^n U_i^2 \|x_i\|_2^2] \le n(p-1) + 3n$
    since $x_i\sim N(0, I_p)$ and $U_i=x_i^T w$.
    For \eqref{bound_mean_field_LOO}, by assumption on the test
    function $g$
    the left-hand side is bounded from above by
    $\E \frac{1}{n}\sum_{i=1}^n|C_i| (1+|x_i^T b^i| + |C_i + x_i^Tb^i|)$
    so that the Cauchy-Schwarz inequality combined with
    \eqref{desired} and \eqref{bound_b_i} gives \eqref{bound_mean_field_LOO}.
\end{proof}

\subsection{Examples of matrices $\hat A$}
For concreteness, let us provide the expression of $\hat A$
in a few examples of interest.
Assuming that $L_{y_i}(\cdot)$ is twice-differentiable,
and let $D\in\R^{n\times n}$ be the diagonal matrix \eqref{D}
with $D_{ii}=L_{y_i}''(x_i^T\hat b)$.
\begin{example}
    \label{example:twice}
    Assume $R$ is twice differentiable
    with Hessian $\nabla^2 R(b)$
    having smaller eigenvalue bounded from below by $\nu n$ for some $\nu\ge 0$,
    for every $b\in\R^p$. Then,
    $\hat A  = (X^TD X + \nabla^2 R(\hat b))^{-1}.$
\end{example}
\begin{example}
    \label{example:enet}
    If $R(b) = \lambda \|b\|_1 + n \nu\|b\|_2^2/2$
    is the elastic-net for $\lambda, n\nu \ge 0$,
    $$
    \hat A_{\hat S, \hat S}  = (X_{\hat S}^TD X_{\hat S} + n\nu I_{\hat S,\hat S})^{-1},
    \qquad
    \hat A_{j,k}=0 \text{ if } j\notin \hat S \text{ or } k\notin \hat S
    $$
    almost surely 
    where $\hat S = \{j\in[p]: \hat b_j\ne 0\}$ is the support of $\hat b$.
    Above $X_{\hat S}$ is the submatrix of $X$ with columns
    indexed in $\hat S$ and $I_{\hat S,\hat S}$ is the identity matrix
    of size $|\hat S|$.
\end{example}
\begin{example}[group-lasso]
    \label{example:group}
    If $\lambda_1,...,\lambda_k>0$, $\nu\ge 0$,
    a partition $(G_1,...,G_K)$ of $[p]$ is given
    and $R(b) = \sum_{k=1}^K \lambda_k \|b_{G_k}\|_2 + n\nu\|b\|_2^2/2$ 
    then with
    $\hat T = \{k\in [K] : \|\hat b_{G_k}\|_2 >0 \}$
    and $\hat S = \cup_{k\in \hat T} G_k$, almost surely
    $$
    \hat A_{\hat S, \hat S}  = 
    \Bigl(X_{\hat S}^TD X_{\hat S} +
        n\nu I_{\hat S,\hat S}
        +
        \sum_{k\in \hat T}
        \frac{\lambda_k}{\|\hat b_{G_k}\|_2}
        \Bigl(I_{G_k,G_k} - \frac{\hat b_{G_k}\hat b_{G_k}^T}{\|\hat b_{G_k}\|_2^2}\Bigr)
    \Bigr)^{-1}
    $$
    and $\hat A_{j,k}=0$ if $j\notin \hat S$ or $k\notin \hat S$.
\end{example}

These formulae are obtained by differentiating the KKT conditions.
For instance, in the case of twice-differentiable $R$,
differentiating $\sum_{l=1}^n x_l L_{y_l}'(x_l^T\hat b)  + \nabla R(\hat b)= 0$
with respect to $x_{ij}$ gives
$e_j L_{y_i}'(x_i^T\hat b) + (\nabla^2R(\hat b) + X^TDX) \frac{\partial}{\partial x_{ij}}\hat b + X^TD e_i \hat b_j = 0$.
The derivative formula in \Cref{prop:mean_field} thus holds
with $\hat A$ given by \Cref{example:twice}.
More care is needed in the case of \Cref{example:enet,example:group}
as the penalty is non-differentiable. In such non-differentiable cases,
the above formulae are obtained by first showing that
almost surely the active set $\hat S$ is locally constant;
once this is established, the KKT conditions can be differentiated
as in the twice-differentiable case.
We refer to \cite[Section 4]{bellec_zhang2019second_poincare}
for such arguments applied to the lasso and group-lasso.
In \Cref{prop:mean_field} the matrix $\hat A$ is shown to exist,
however no general closed-form expression for $\hat A$ is available
for general regularizer $R$ at this point. The above cases are the most
common though, and the formulae for $\hat A$ in these cases are 
explicit and given above.

For ALO-CV, the construction with the Newton step
\eqref{newton} leading to \eqref{alo} is only valid
when the Hessian $\nabla^2 R$ exists and is positive-definite.
In this case matrix $\hat A$ coincides with the expression
in \Cref{example:twice}. For non-differentiable penalty functions
$R$ such as the L1 or elastic-net penalty, the ALO-CV construction
can be extended by approximation
\cite[Section 2.2]{rad2018scalable}
leading to a well-defined matrix $\hat A$ in \eqref{alo}.
In all cases where both
\begin{itemize}
    \item a closed-form formula for the matrix $\hat A$
        in the expression $\frac{\partial}{\partial x_{ij}}\hat b$
        in \Cref{prop:mean_field} is available, and
    \item the ALO-CV construction \eqref{alo} has been extended
        for some matrix $\hat A$,
\end{itemize}
the expression of $\hat A$ in the derivative formula of \Cref{prop:mean_field}
and the expression of $\hat A$ in the ALO-CV construction \eqref{alo}
coincide.

In existing analysis of ALO-CV for non-differentiable regularizer
\cite{auddy2024approximate} where the penalty is the elastic-net,
the major difficulty
comes from controlling the difference between
the support $\hat S$ of $\hat b$ defined in \Cref{example:enet}
and the support $S^i$ of the leave-one-out vector $b^i$ in \eqref{min_bi}.
\citet{auddy2024approximate} manage to achieve such control
between the supports $S^i$ and $\hat S$
for the square loss, elastic-net penalty and isotropic Gaussian designs,
and as a consequence obtains the consistency of ALO-CV in this case.
This paper develops an alternative argument to derive the consistency
of ALO-CV for non-differentiable regularizer,
by relating ALO-CV directly to mean-field approximations
such as \eqref{bound_mean_field_LOO},
without trying to control the difference between 
the supports 
$\hat S$ and $S^i$ or other discrepancies between $\hat b$ and $b^i$.

\subsection{Reconciling ALO-CV and mean-field inference}
An observation that motivated the present study is that
ALO-CV provides at first glance a different picture than the usual
mean-field asymptotics results of the proportional regime.
ALO-CV has been shown \cite{rad2018scalable,xu2019consistent,auddy2024approximate}
to consistently estimate the leave-one-out estimate \eqref{loo}, i.e.,
$$
\alo
=
\frac1n
\sum_{i=1}^n g\Bigl(x_i^T \hat b + L_{y_i}'(x_i^T\hat b)W_i ,~ y_i\Bigr)
\approx
\loo
\text{ where }
W_i \coloneq \frac{x_i^T \hat A x_i}{1 - x_i^T \hat A x_i L_{y_i}''(x_i^T\hat b)}
$$
under suitable assumptions.
On the other hand, \Cref{prop:mean_field} shows that
\begin{equation}
\frac1n
\sum_{i=1}^n g\Bigl(x_i^T \hat b + L_{y_i}'(x_i^T\hat b)\tr[\Sigma \hat A] ,~ y_i\Bigr)
\approx
\loo.
\label{mean_field_approx_LOO_informal}
\end{equation}
Typical proportional asymptotics results such as \eqref{mean_field}
(e.g., Theorems 1 and 2 in \cite{loureiro2021capturing}) also suggest
that the weight in front of $L_{y_i}'(x_i^T\hat b)$ should be
independent of $i$ to obtain the mean-field limit
of the generalization error \cite[Theorem 1, eq. (2.5)]{loureiro2021capturing}.

Mean-field inference thus points to the weight $\tr[\Sigma \hat A]$
in \eqref{mean_field_approx_LOO_informal} while ALO-CV
points to the weights $W_i$ above.
This raises the question of whether the weights $(W_i)_{i\in[n]}$
concentrate around $\tr[\Sigma \hat A]$ or the deterministic value
$\gamma_*$ in \eqref{mean_field} (it is shown
that $\tr[\Sigma \hat A] - \gamma_*\to^P 0$ in 
\cite[Lemma 13]{koriyama2024precise} in regression for separable loss and
penalty, so that $\tr[\Sigma \hat A]$ and $\gamma_*$ can be used interchangeably).

Relating ALO-CV to estimates of the form \eqref{mean_field_approx_LOO_informal}
(with a single weight independent of $i$)
has appeared before in \cite{xu2019consistent}.
In \cite[equation (17)]{xu2019consistent}, the ALO-CV weights $W_i$ are shown,
under suitable
conditions that include twice-differentiability
of both the loss and penalty,
to concentrate uniformly in $i\in[n]$ around
a data-driven scalar (independent of $i$)
found by solving a nonlinear equation
\cite[(8)-(9)]{xu2019consistent}).
However, the argument in \cite{xu2019consistent} relies on
twice-differentiability of the loss and penalty and requires
that the second derivatives are themselves $\alpha$-Holder continuous
for some $\alpha\in(0,1]$.

When
comparing the weight $\tr[\Sigma \hat A]$ in \eqref{mean_field_approx_LOO_informal}
to the weights $W_i$ in ALO-CV,
a resembling known approximation that comes to mind is
\begin{equation}
    \label{approx_trAtrV_df}
\tr[\Sigma \hat A] \approx \frac{\tr[X\hat AX^TD]}{\tr[D - DX\hat AX^TD]}
= \frac{
    \sum_{i=1}^n x_i^T\hat A x_i L_{y_i}''(x_i^T\hat b)
}{
    \sum_{i=1}^n
 (1-
    x_i^T\hat A x_i L_{y_i}''(x_i^T\hat b)
    )
L_{y_i}''(x_i^T\hat b)
},
\end{equation}
see \cite[Section 5]{bellec2021derivatives} in linear regression
and \cite{bellec2022observable} in single-index models.
Without the two sums in the numerator and the denominator,
the rightmost fraction would be the same as the $W_i$ weights
from ALO-CV.
The approximation \eqref{approx_trAtrV_df} in \cite{bellec2021derivatives,bellec2022observable} is proved for nondifferentiable regularizers,
so \eqref{approx_trAtrV_df} provides a little hope 
towards reconciling ALO-CV with \eqref{mean_field_approx_LOO_informal}
under no smoothness assumption on the regularizer $R(\cdot)$ in \eqref{hat_b}.
However, the proofs of \eqref{approx_trAtrV_df} in \cite{bellec2022observable,bellec2021derivatives} do leverage the two summations in the numerator and denominator
of \eqref{approx_trAtrV_df}.
The main technical contribution of the present paper is a novel argument,
different from the leave-one-out bounds in \cite{xu2019consistent,auddy2024approximate} or the Stein formulae argument to prove \eqref{approx_trAtrV_df}
in \cite{bellec2021derivatives,bellec2022observable},
to obtain $W_i\approx \tr[\Sigma \hat A]$ for non-differentiable regularizers.

\subsection{Notation}
The norms $\|\cdot\|_2, \|\cdot\|_1$ are the Euclidean and $\ell_1$ norms
of vectors. If none is specified, $\|\cdot\|$ denotes the Euclidean norm.
For matrices, $\|\cdot\|_{op}$ is the operator norm (largest singular value)
and $\|\cdot\|_F$ is the Frobenius norm.
We use the set notation $[n]=\{1,...,n\}$ and similarly for $[p]$.

\section{Square loss and rotational invariance}

Let us first consider the square loss $L_{y_i}(t) = (y_i-t)^2/2$
in \eqref{hat_b}, so that the minimization problem becomes
\begin{equation}
    \label{hat_b_square_loss}
\hat b = \argmin_{b\in \R^p}
\|y - Xb\|_2^2 + R(b),
\end{equation}
and assume that $(X,y)$ has continuous distribution.
It is known that for any fixed $X$, the mapping
$y\mapsto X\hat b$ is 1-Lipschitz (see, e.g., \cite{bellec2016bounds})
so that its Jacobian
$$
H \coloneq \frac{\partial}{\partial y} X\hat b
\in \R^{n\times n}
$$
exists almost everywhere.
The matrix $H$ is sometimes referred to the ``hat'' matrix and
appears in the literature on Stein's unbiased risk estimate
\cite{stein1981estimation}.
In this case, the matrix $\hat A$ in \Cref{proportional_regime}
(whose existence is granted by \cite[Theorem 1]{bellec2021derivatives}
for strongly-convex regularizer)
is related to $H$ by $H=X\hat A X^T$.

In this case, the ALO-CV weights are given by
$$
W_i = \frac{H_{ii}}{1-H_{ii}}.
$$
On the other hand,
\cite[Section 5]{bellec2021derivatives} proves that
under a $n\mu$-strong convexity assumption on $R$
and \eqref{proportional_regime},
\begin{equation}
    \label{result-trAtrV-df}
\E\Bigl[ \Big|\tr[\Sigma\hat A] - \frac{\tr[H]}{n-\tr[H]} \Big| \Bigr] \le 
\frac{C(\mu,\delta)}{\sqrt n}
\end{equation}
so that $\tr[\Sigma\hat A]$ and $\tr[H]/(n-\tr[H])$ can be used interchangeably
in results of the form \eqref{bound_mean_field_LOO}
or \eqref{mean_field_approx_LOO_informal}.
The following result shows that the approximation
$W_i\approx \tr[H]/(n-\tr[H])$ holds uniformly in $i\in[n]$.

\begin{proposition}
    \label{prop:square_loss}
    Consider the regularized least-squares estimate
    \eqref{hat_b_square_loss} where $R:\R^p\to\R$ is convex,
    and assume a linear model $y=X\beta^* + \eps$
    where $\eps\sim N(0,\sigma^2)$ is independent of $X$ and
    the rows $x_i$ of $X$ are iid $N(0,\Sigma)$.
    Then
    \begin{equation}
    \max_{i\in [n]}|H_{ii} - \tr[H]/n|
    \le
    c \sqrt{\log(n)/n}
    \label{conclusion_H_ii_tr-H_n}
    \end{equation}
    with probability at least $1-c/n$
    for some absolute constant $c>0$.
    Furthermore, if $R$ in \eqref{hat_b_square_loss}
    is $(\mu n,I_p)$-strongly convex then in the same event
    \begin{equation}
        \label{conclusion_W_i_squares_loss}
    \Big|
    W_i
    - \frac{\tr[H]}{n-\tr[H]}
    \Big|
    =
    \Big|
    \frac{H_{ii}}{1-H_{ii}}
    - \frac{\tr[H]}{n-\tr[H]}
    \Big|
    \le c\sqrt{\log(n)/n} \Bigl(1 + \|X\|_{op}^2/(n\mu)\Bigr)^2.
    \end{equation}
\end{proposition}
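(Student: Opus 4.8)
The plan is to prove the two conclusions separately. The bound \eqref{conclusion_H_ii_tr-H_n} concerns only the law of the symmetric matrix $H = X\hat A X^T$, and the key point is that $H$ is rotationally invariant: $H \stackrel{d}{=} QHQ^T$ for every fixed orthogonal $Q\in\R^{n\times n}$. To see this I would write $X = Z\Sigma^{1/2}$ with $Z$ having iid $N(0,1)$ entries; then $X\hat b = Z\hat c$ where $\hat c = \argmin_{c}\|y - Zc\|_2^2 + R(\Sigma^{-1/2}c)$, so $H$ depends on the data only through $(Z,y)$, and the least-squares problem with design $QZ$ and response $Qy$ has fitted vector $Q(Z\hat c)$ and hat matrix $QHQ^T$. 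Since $QZ\stackrel{d}{=}Z$ and $y = Z\Sigma^{1/2}\beta^* + \eps$ transforms covariantly, with $(QZ,Q\eps)\stackrel{d}{=}(Z,\eps)$, this gives $(Z,y)\stackrel{d}{=}(QZ,Qy)$ and hence the claimed invariance.

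Next I would integrate this identity over the Haar measure on $O(n)$: using bi-invariance of Haar measure together with a measurable eigendecomposition of $H$, one upgrades ``invariance under each fixed $Q$'' to the representation $H\stackrel{d}{=}W\diag(\Lambda)W^T$ with $W$ Haar-distributed on $O(n)$ and independent of the eigenvalue vector $\Lambda$ of $H$. Since $y\mapsto X\hat b$ is $1$-Lipschitz and $H\succeq 0$, the entries of $\Lambda$ lie in $[0,1]$. Conditionally on $\Lambda$, the diagonal entry $H_{ii}$ is then distributed as $\sum_{k}\Lambda_k v_k^2$ with $(v_1,\dots,v_n)$ uniform on $S^{n-1}$ (the $i$-th row of $W$), so it has conditional mean $\tr[H]/n$ and, by concentration of quadratic forms on the sphere (Hanson--Wright, using $\max_k\Lambda_k\le 1$ and $\sum_k\Lambda_k^2\le\tr[H]\le n$; or L\'evy's lemma for Lipschitz functions on $S^{n-1}$), satisfies $|H_{ii}-\tr[H]/n|\le t$ with probability at least $1-e^{-cnt^2}$ for an absolute constant $c>0$. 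A union bound over $i\in[n]$ with $t$ a suitable multiple of $\sqrt{\log(n)/n}$ then gives \eqref{conclusion_H_ii_tr-H_n}.

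For \eqref{conclusion_W_i_squares_loss}, write $a = H_{ii}$ and $\bar a = \tr[H]/n$; the elementary identity $\frac{a}{1-a}-\frac{\bar a}{1-\bar a}=\frac{a-\bar a}{(1-a)(1-\bar a)}$ reduces the task to \eqref{conclusion_H_ii_tr-H_n} together with a uniform lower bound on $1-H_{ii}$, which automatically also lower-bounds $1-\tr[H]/n=\tfrac1n\sum_i(1-H_{ii})$. Here I would use the Woodbury--Sherman--Morrison identity from the derivation of \eqref{alo}, which for the square loss reads $W_i=\frac{H_{ii}}{1-H_{ii}}=x_i^T\bigl(X^TX - x_ix_i^T + \nabla^2 R(\hat b)\bigr)^{-1}x_i$ when $R$ is twice differentiable; since $X^TX - x_ix_i^T + \nabla^2 R(\hat b)\succeq\nabla^2 R(\hat b)\succeq n\mu I_p$, this gives $W_i\le\|x_i\|_2^2/(n\mu)\le\|X\|_{op}^2/(n\mu)$, hence $1-H_{ii}=(1+W_i)^{-1}\ge\bigl(1+\|X\|_{op}^2/(n\mu)\bigr)^{-1}$ uniformly in $i$. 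Substituting the numerator bound \eqref{conclusion_H_ii_tr-H_n} and these denominator bounds into the identity yields \eqref{conclusion_W_i_squares_loss}. The non-differentiable case is handled either via the active-set block form of $\hat A$ in \Cref{example:enet,example:group} (run the same Sherman--Morrison step within the active coordinates), or by approximating $R$ with a mollified smooth strongly convex penalty having the same strong-convexity constant and passing to the limit.

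The concentration estimate and the algebra are routine. The two delicate points I anticipate are (i) the passage from ``$H\stackrel{d}{=}QHQ^T$ for each fixed $Q$'' to the Haar-conjugation representation with $W$ independent of $\Lambda$, which must be argued through bi-invariance of Haar measure and a measurable eigendecomposition rather than by substituting a single random rotation; and (ii) the non-differentiable case of the lower bound on $1-H_{ii}$, where no closed form for $\hat A$ is available in general, so one must go through the active-set structure or through an approximation whose limiting behaviour for the hat matrix requires justification. The latter is, I expect, the main obstacle if one wants the clean constant $(1+\|X\|_{op}^2/(n\mu))^2$ in full generality.
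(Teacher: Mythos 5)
Your first part is essentially the paper's argument, reached by a slightly longer road. The paper realizes $(X,\eps)=(QG,Qz)$ with $Q$ Haar on $O(n)$ and $(G,z)$ independent of $Q$ from the outset, so that $H=Q\bar H Q^T$ with $\bar H$ independent of $Q$ holds by construction and the ``upgrade'' you worry about in point (i) never arises: one applies Hanson--Wright (or L\'evy) directly to the quadratic form $e_i^TQ\bar HQ^Te_i$ with $Q^Te_i$ uniform on the sphere, then a union bound. Your route via ``$H\stackrel{d}{=}QHQ^T$ for each fixed $Q$'' plus bi-invariance and a measurable eigendecomposition is valid, but it is extra work that the direct realization makes unnecessary; otherwise the two arguments coincide (eigenvalues in $[0,1]$ from the $1$-Lipschitz property of $y\mapsto X\hat b$, conditional mean $\tr[H]/n$, sub-Gaussian deviation at scale $\sqrt{\log(n)/n}$).

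For the second part your route genuinely differs, and the obstacle you flag at the end is exactly where it breaks: the Sherman--Morrison representation $W_i=x_i^T(X^TX-x_ix_i^T+\nabla^2R(\hat b))^{-1}x_i$ presupposes a Hessian (or an active-set block structure as in \Cref{example:enet,example:group}), whereas \eqref{conclusion_W_i_squares_loss} is stated for any $(\mu n,I_p)$-strongly convex $R$, and the mollification fallback you propose is left unjustified precisely at the step that matters (convergence of the hat matrices of the mollified problems). The paper sidesteps all of this: it bounds the Jacobian $H$ of the $1$-Lipschitz map $y\mapsto X\hat b$ directly, using only strong convexity, to get $\|H\|_{op}\le \bigl((n\mu)/\|X\|_{op}^2+1\bigr)^{-1}$ (the argument between (D.7) and (D.8) of \cite{bellec2021derivatives}); since $H_{ii}\le\|H\|_{op}$ and $\tr[H]/n\le\|H\|_{op}$, both $1/(1-H_{ii})$ and $1/(1-\tr[H]/n)$ are at most $1+\|X\|_{op}^2/(n\mu)$, and your elementary identity (or the paper's equivalent two-term bound) finishes the proof. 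So the gap is real but repairable by replacing your local Sherman--Morrison bound with this global operator-norm bound on $H$, which requires no differentiability of $R$ at all.
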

With \eqref{result-trAtrV-df} and \eqref{conclusion_W_i_squares_loss},
we see that
$$
\tr[\Sigma \hat A],
\qquad \frac{\tr[H]/n}{1-\tr[H]/n},
\qquad W_i=\frac{H_{ii}}{1-H_{ii}},
$$
can be used interchangeably in results of the form \eqref{mean_field_approx_LOO_informal} up to a small error term of order $\sqrt{\log(n)/n}$.
The proof given below relies mostly on the rotational invariance of the square loss
and the Gaussian distribution. Smoothness of the regularizer is irrelevant.
Inequality \eqref{conclusion_H_ii_tr-H_n} only requires
convexity of the regularizer,
while for \eqref{conclusion_W_i_squares_loss} we additionally
assume strong convexity as this is a simple condition that lets us
control $1/(1-\tr[H]/n)$. Controlling
$1/(1-\tr[H]/n)$ without strong convexity is possible, e.g.,
\cite{celentano2020lasso}, however this requires more involved arguments
and different assumptions such as small-enough sparsity of $\beta^*$
\cite{celentano2020lasso,bellec2020out_of_sample}.

Let us explain with an example how \eqref{conclusion_H_ii_tr-H_n}
can be used with existing results in mean-field asymptotics
to draw conclusions about ALO-CV.
By Theorems 7 and 8 in \cite{celentano2020lasso},
for any 1-Lipschitz function $\phi:\R^n\to\R$, 
if the sparsity of $\beta^*$ is small enough (in the sense of the
phase transition studied in \cite{celentano2020lasso}),
the Lasso $\hat b$ satisfies the convergence in probability
\begin{equation}
\phi\Bigl(\frac{y-  X \hat b}{(1-\tr[H]/n)\sqrt n}\Bigr)
-
\E[\phi(\tau_* Z)]
\to^P 0
\label{phi}
\end{equation}
where $Z\sim N(0,1)$ and $\tau_*^2$ is such that
of $\sigma^2 + \|\Sigma^{1/2}(\hat b - \beta^*)\|_2^2 - \tau_*^2 \to^P 0$
in the linear model $y=X\beta^* + \eps$ where $X$ has iid $N(0,\Sigma)$
rows and $\eps\sim N(0,\sigma^2 I_n)$ is independent of $X$.
Consider a test function $g:\R^2\to\R$ 
in \eqref{err_b_hat} of the form $g(a, y) = \varphi(a-y)$,
so that the ALO-CV estimate \eqref{alo} is
$$
\alo
=
\frac1n \sum_{i=1}^n \varphi\Bigl(x_i^T\hat b-y_i + W_i (x_i^T\hat b-y_i)\Bigr)
=
\frac1n \sum_{i=1}^n \varphi\Bigl((1+W_i) (x_i^T\hat b-y_i)\Bigr)
$$
and $1+W_i = 1/(1-H_{ii})$.
Theorem 8 in \cite{celentano2020lasso} shows that
$\tr[H]/n=\|\hat b\|_0/n$ is bounded away from 1
in the sense that $1 - \tr[H]/n > c_0 >0$ with high-probability,
so that by \eqref{conclusion_H_ii_tr-H_n} the scalar
$H_{ii}$ is also bounded away from 1 uniformly in $i\in[n]$.
Since $\varphi$ is Lipschitz, \eqref{conclusion_H_ii_tr-H_n} gives
$$
\Big|
\alo
- 
\frac1n\sum_{i=1}^n
\varphi\Bigl(
\frac{x_i^T\hat b - y_i}{1-\tr[H]/n}
\Bigr)
\Big|
\lesssim \sqrt{\log(n)/n}
$$
with high-probability.
Choosing $\phi(u) = \frac1n\sum_{i=1}^n\varphi(u_i\sqrt n)$ 
in \eqref{phi} which is 1-Lipschitz by the Cauchy-Schwarz inequality 
gives
$$
\Big|
\alo
-
\E[\varphi(\tau_* Z)]
\Big|.
$$
Since 
$\sigma^2 + \|\Sigma^{1/2}(\hat b - \beta^*)\|_2^2 - \tau_*^2 \to^P 0$
by \cite{celentano2020lasso},
the quantity
$\E[\varphi(\tau_* Z)]$ is the same as \eqref{err_b_hat}
up to a negligible error term converging to 0 in probability.
This proves rigorously that if the sparsity of $\beta^*$ is small enough
in the sense of the phase transition in \cite{celentano2020lasso},
then ALO-CV consistently estimates the generalization error
\eqref{err_b_hat} for test functions of the form
$g(a,y)=\varphi(a-y)$ for 1-Lipschitz functions $\varphi$
(a natural choice being $\varphi(u) = |u|$).

\begin{proof}[Proof of \Cref{prop:square_loss}]
    Let us realize $(X,\eps)$
    as $(QG, Qz)$ where $Q\in\R^{n\times n}$ is random
    orthogonal matrix in $O(n)$ distributed according to the Haar measure,
    and $(G,z)$ are independent $G$ having iid $N(0,\Sigma)$ rows
    and $z\sim N(0,\sigma^2)$.
    Then $\hat b$ in \eqref{hat_b_square_loss} is independent of $Q$
    \begin{equation}
    \hat b =
    \argmin_{b\in \R^p}
    \|y - Xb\|_2^2 + R(b)
    =
    \argmin_{b\in \R^p}
    \|z - Gb\|_2^2 + R(b)
    \label{argue_rotational_inv}
    \end{equation}
    due to the square loss which gives $\|y-Xb\|_2 = \|Qz-QGb\|_2 = \|z-Gb\|_2$
    for all $b\in\R^p$.
    Furthermore, with $\bar y = z + G\beta^* = Q^T y$
    and $\bar H = \frac{\partial }{\partial \bar y} G\hat b$
    we have by the chain rule
    \begin{align*}
    H 
    &= \tfrac{\partial }{\partial y} (X\hat b) &&\text{ by definition of $H$}
  \\&= Q \tfrac{\partial }{\partial y} (G\hat b) &&\text{ by linearity }
    \\&= Q \tfrac{\partial }{\partial \bar y} (G\hat b) Q^T 
      &&\text{ by the chain rule}
    \\&= Q \bar H Q^T.
    \end{align*}
    By construction, $\bar H$ is independent of $Q$ since this is the hat
    matrix in the linear model $\bar y = G\beta^* + z$
    of the estimator with design matrix $G$ and response $\bar y$.
    Furthermore, $\bar H$ is symmetric with eigenvalues in $[0,1]$
    by \cite[Remark 3.3]{bellec_zhang2018second_stein}.
    Since $Q^Te_i$ for $e_i\in\R^n$ the $i$-th canonical basis vector
    is uniformly distributed on the sphere,
    by the Hanson-Wright inequality
    (see for instance
    \cite[Proposition 6.4 with $t=\log(n^2)$]{simchowitz2018tight})
    we find since $\bar H$ is symmetric with eigenvalues in $[0,1]$ that
    $$
    \forall i\in[n],
    \qquad
    \mathbb P[
    |H_{ii} - \tr[\bar H]/n| \le
    c \sqrt{n/\log n} + c \log(n)/n
    ] \ge 1 - c /n^2
    $$
    for some absolute constant $c>0$.
    The union bound over $i\in[n]$ proves \eqref{conclusion_H_ii_tr-H_n}
    since $\tr\bar H = \tr H$.
    By the argument  \cite[between inequalities (D.7) and (D.8)]{bellec2021derivatives},
    if $R$ is $(n\mu, I_p)$-strongly convex we have
    $\|H\|_{op} \le 1/((n\mu)/\|X\|_{op}^2 + 1)$, so that
    using $H_{ii}\le \|H\|_{op}$ and $\tr[H]/n \le \|H\|_{op}$ we get
    $$
    \max\Bigl\{
    \frac{1}{1-\tr[H]/n}
    ,
    \frac{1}{1-H_{ii}}
    \Bigr\}
    \le \frac{(n\mu)/\|X\|_{op}^2 + 1}{(n\mu)/ \|X\|_{op}^2}
    =
    1 + \|X\|_{op}^2/(n\mu).
    $$
    Due to
    $|\frac{H_{ii}}{1-H_{ii}} - \frac{\tr H/n}{1-\tr[H]/n}|
    \le
    \frac{|H_{ii} - \tr[H]/n|}{1-H_{ii}}
    + \frac{\tr[H]/n |H_{ii}-\tr[H]/n|}{(1-H_{ii})(1-\tr[H]/n)}
    $
    this proves \eqref{conclusion_W_i_squares_loss}.
\end{proof}

\section{Beyond rotational invariance: probabilistic results}
\label{sec:proba}

Beyond the square loss, we cannot use rotational invariance
as in the previous section.
The probabilistic result that lets us control the error in the approximation
$W_i \approx \tr[\hat A \Sigma]$ is the following.

\begin{theorem}
    \label{thm:proba}
    Let $\mu>0$ and $\delta$ as in \eqref{proportional_regime}.
    Let $X$ had iid $N(0,\Sigma)$ rows
    with $\Sigma$ invertible.
    Consider matrices $P\in\R^{p\times p}$
    and $Q\in\R^{n\times n}$
    such that $\Sigma^{1/2}P\Sigma^{-1/2}$
    and $Q$ are orthogonal projections,
    deterministic $\bar U\in\R^{n\times p}$ and $\bar V\in\R^{n\times p}$
    and the event
    \begin{equation}
        \label{E}
    E = \Bigl\{
    X(I_p - P) = \bar U,
    \qquad
    (I_n - Q)X = \bar V
    \Bigr\}.
    \end{equation}
    Let $H\in\R^{p\times p}$ 
    with $H - n\mu I_p$ positive semi-definite.
    Let $D\in\R^{n\times n}$ be diagonal with entries in $[0,1]$.
    Let $A=(X^TDX + \Sigma^{1/2}H\Sigma^{1/2})^{-1}$ and define
    \begin{equation}
    \Rem_i = 
    x_i^TAx_i
    - \tr[A\Sigma](1-D_{ii} x_i^TAx_i).
    \label{Rem_i}
    \end{equation}
    Then the constants $K,K'$ from \Cref{prop:concentre}
    satisfy
    $$
    K' \le \frac{\delta^{-1/2}}{\mu\sqrt n}\max\{1, \frac{\delta^{-1/2}}{\sqrt \mu} \},
    \qquad
    K \le \frac{4 }{\delta \mu^{3/2} \sqrt n} + \frac{2}{\sqrt \delta \mu^{3/2} n},
    \qquad
    K+K' \le \frac{C(\delta,\mu)}{\sqrt n},
    $$
    and we have for some absolute constant $C>0$,
    all $t\ge C$ and any fixed $i\in[n]$
    \begin{equation}
    \label{eq:concentre}
    \mathbb P\Bigl(
    \Big|
    \Rem_i
    -\E[\Rem_i \mid E]
    \Big| > t K
    + t K'
    \Bigr) \le 2\exp(-t^2/2)
    + C \exp(- t^{2/3} /C )
    .
    \end{equation}
    With $d_P = \dim \ker P=\rank(I_p-P)$ and $d_Q = \dim \ker Q = \rank(I_n-Q)$,
    the conditional expectations satisfy
    \begin{equation}
    \E\Bigl[\sum_{i=1}^n \E[\Rem_i \mid E]^2
    \Bigr]
    \le
    c
    \Bigl(\frac{d_P + d_Q}{\delta}
    + \frac{1}{n^2\mu^2}
    \Bigr)\frac{1}{\delta \mu^2}
    + 
    c\Bigl(\frac{d_P^2}{\mu^2 n} + \frac{d_Q}{\delta^2\mu^2}\Bigr)
    \Bigl(1+\frac{1}{\delta \mu}\Bigr)
    \label{conclusion_E_Rem_i}
    \end{equation}
    for some absolute constant $c>0$.
    The right-hand side is smaller than a constant
    $C(\delta, \mu,d_P,d_Q)$ depending only on $\delta,\mu,d_P,d_Q$.
\end{theorem}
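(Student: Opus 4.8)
As in the proof of \Cref{prop:mean_field} I would first reduce to $\Sigma=I_p$ by the change of variables $X\mapsto X\Sigma^{-1/2}$: this turns $\Sigma^{1/2}P\Sigma^{-1/2}$ and $Q$ into genuine orthogonal projections of $\R^p$ and $\R^n$, turns $A$ into $(X^TDX+H)^{-1}$, leaves $\Rem_i$ and $D$ unchanged, and reintroduces the factors $\Sigma^{1/2}$ in the final operator-norm bounds. Assuming $\Sigma=I_p$, I would then describe the law of $X$ given $E$. Decomposing $\R^n=\mathrm{range}(Q)\oplus\ker Q$ and $\R^p=\mathrm{range}(P)\oplus\ker P$ and writing $X$ in the corresponding $2\times2$ block form, the event $E$ fixes every block except the $\mathrm{range}(Q)\times\mathrm{range}(P)$ block; conditionally on $E$ that block is an iid $N(0,1)$ matrix $G$ of size $(n-d_Q)\times(p-d_P)$, so that $X=X_0+\iota(G)$ for a deterministic $X_0$ built from $\bar U,\bar V$ and an isometric embedding $\iota$. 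Then $A$, $x_i^TAx_i$, $\tr[A]$ and $\Rem_i$ are all functions of $G$, and both conclusions become statements about this function of a standard Gaussian vector.

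\textbf{Concentration \eqref{eq:concentre}.} The plan is to verify that $G\mapsto\Rem_i(G)$ fits the template of \Cref{prop:concentre}: a globally $K'$-Lipschitz part plus a part whose gradient grows only polynomially, with scale $K$. I would compute the partial derivatives from $\partial_{X_{kl}}A=-D_{kk}A(e_lx_k^T+x_ke_l^T)A$ and $\partial_{X_{il}}x_i=e_l$, obtaining expressions such as $\partial_{X_{kl}}(x_i^TAx_i)=-2D_{kk}(e_l^TAx_i)(x_k^TAx_i)+2\delta_{ik}(Ax_i)_l$ and $\partial_{X_{kl}}\tr[A]=-2D_{kk}(A^2x_k)_l$, then restrict to the free coordinates $G_{ab}$. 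Each derivative carries two factors of $A$; since $H-n\mu I_p\succeq0$ gives the \emph{deterministic} bound $\|A\|_{op}\le1/(n\mu)$ and $\|D\|_{op}\le1$, the gradient norm is controlled by $\|X\|_{op}$, $\|x_i\|_2$ and the quadratic forms $x_k^TAx_i$ (using $\sum_k(x_k^TAx_i)^2\le\|X\|_{op}^2\|Ax_i\|_2^2$). Splitting $\|X\|_{op}$ and $\|x_i\|_2$ into their expectations plus positive excesses, the "typical" part has Lipschitz constant of order $(n\mu)^{-1}\sqrt n\cdot(n\mu)^{-1}\sqrt n\asymp1/(\mu^2 n)\cdot\sqrt{\cdots}$, matching the claimed $K'\le\delta^{-1/2}\mu^{-1}n^{-1/2}\max\{1,\delta^{-1/2}\mu^{-1/2}\}$, and Gaussian isoperimetry on this part gives the $2\exp(-t^2/2)$ term; the excess part has a gradient bounded by a low-degree polynomial in $\|X\|_{op},\|x_i\|_2$ with scale $K$ as displayed, and feeding it into \Cref{prop:concentre} yields the $C\exp(-t^{2/3}/C)$ term. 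Carrying along the powers of $\delta^{-1/2}$ (from $p/n\ge\delta$) and of $\mu$ gives the stated bounds on $K$, $K'$ and $K+K'$.

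\textbf{Conditional bias \eqref{conclusion_E_Rem_i}.} This is where the real work is. The backbone is Gaussian integration by parts: in the \emph{unconditional} law one gets, via $\partial_{X_{il}}A^{-1}=D_{ii}(e_lx_i^T+x_ie_l^T)$, the exact identity
\[
\E[\Rem_i]=-\,\E\!\bigl[D_{ii}\,x_i^TA^2x_i\bigr]\qquad(\Sigma=I_p),
\]
so $|\E[\Rem_i]|\le\|A\|_{op}^2\,\E\|x_i\|_2^2\lesssim p/(n\mu)^2$. Under the conditional law given $E$ the same computation runs with $G$ in place of $X$: differentiating only in the free directions $\iota(\cdot)$ reproduces the identity above but now over the range of $\iota$ only, leaving boundary terms that measure how the conditioning couples $x_i$ to the rest of $X$, and these are exactly what carry the factors $d_P$ and $d_Q$ (through $\|(I-P)(\cdot)\|$ and $\|(I-Q)(\cdot)\|$). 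I would organize the estimate by: (i) reducing to the leave-one-out matrix $A_{-i}=(X^TDX-D_{ii}x_ix_i^T+H)^{-1}$ via Sherman--Morrison, which gives $\Rem_i=\frac{x_i^TA_{-i}x_i-\tr[A_{-i}]}{1+D_{ii}x_i^TA_{-i}x_i}+\frac{D_{ii}\,x_i^TA_{-i}^2x_i}{(1+D_{ii}x_i^TA_{-i}x_i)^2}$ and makes the first term second order in the (conditionally small) fluctuation $x_i^TA_{-i}x_i-\tr[A_{-i}]$; (ii) bounding that fluctuation and the boundary terms using $\|A\|_{op}\le1/(n\mu)$, $\|A_{-i}\|_{op}\le1/(n\mu)$ and the Gaussian tail bounds for $\|X\|_{op}$ and $\max_k\|x_k\|_2$; (iii) summing over $i\in[n]$ with inequalities of the form $\sum_i\|Ax_i\|_2^2\le\|A\|_{op}\|AX^T\|_{op}^2\le\|A\|_{op}^3\|X\|_{op}^2$ to produce the $1/(n^2\mu^2)$-type remainder, and isolating the conditioning contributions as the $(d_P+d_Q)/\delta$, $d_P^2/(\mu^2 n)$ and $d_Q/(\delta^2\mu^2)$ pieces.

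\textbf{Main obstacle.} I expect the bottleneck to be step (iii), the bookkeeping in the conditional integration-by-parts: one must track exactly how the constraints $X(I-P)=\bar U$, $(I-Q)X=\bar V$ enter the boundary terms left over when $\partial/\partial G$ replaces $\partial/\partial X$, and bound each of them against $d_P$, $d_Q$ and the operator-norm quantities without losing powers of $n$; the $d_P^2$ (rather than $d_P$) appearing with the benign $1/n$ factor, versus $d_Q$ (not $d_Q^2$) appearing with the $\mu,\delta$-only factor, is the kind of asymmetry that only falls out after carefully separating the "column" conditioning (through $P$, affecting $A$ directly) from the "row" conditioning (through $Q$, affecting which $x_k$'s remain random). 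Once all terms are in hand, the final assertion that the right-hand side of \eqref{conclusion_E_Rem_i} is a constant $C(\delta,\mu,d_P,d_Q)$ is immediate from $p/n\le\delta^{-1}$.
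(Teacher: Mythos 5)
Your overall strategy coincides with the paper's: reduce to $\Sigma=I_p$, observe that conditionally on $E$ the only remaining randomness is the Gaussian block $QXP$ (the paper writes $X=QXP+Q\bar U+\bar V$), get \eqref{eq:concentre} from \Cref{prop:concentre}, and control $\E[\Rem_i\mid E]$ by Gaussian integration by parts in the free directions. Your unconditional identity $\E[\Rem_i]=-\E[D_{ii}x_i^TA^2x_i]$ is correct and is exactly the Stein computation the paper runs conditionally. Two points, one minor and one substantive.

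Minor: for \eqref{eq:concentre} you have the roles of $K$ and $K'$ interchanged, and the mechanism is not a ``typical plus excess'' split of $\|X\|_{op}$. In \Cref{prop:concentre} the summand $\tr[A](1-D_{ii}x_i^TAx_i)$ of $\Rem_i$ is \emph{globally} $K$-Lipschitz (this is what produces the clean $2\exp(-t^2/2)$ term), while the other summand $x_i^TAx_i$ is not Lipschitz: its gradient is only bounded by $K'(1+3\|x_i\|_2^2/p)$, a random quantity. The paper converts this into the $C\exp(-t^{2/3}/C)$ tail via Bogachev's comparison $\E[\phi(h-\E h)]\le\E[\phi(\tfrac{\pi}{2}\frac{d}{dt}h(X+t\dot X)|_{t=0})]$ with a convex nondecreasing $\phi(u)\asymp\exp(c|u|^{2/3})$; truncating $\|X\|_{op}$ at its expectation would not by itself yield a tail bound valid for all $t$.

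Substantive gap: your plan (i)--(iii) for \eqref{conclusion_E_Rem_i} hinges on the Sherman--Morrison reduction to $A_{-i}$, whose usual payoff is the independence of $x_i$ from $A_{-i}$. Conditionally on $E$ this independence fails: row $i$ of $X$ is $e_i^TQXP+e_i^T(Q\bar U+\bar V)$, so $x_i$ and the other rows are coupled through $Q$ and share the same free Gaussian block. Consequently the ``fluctuation'' $x_i^TA_{-i}x_i-\tr[A_{-i}]$ has no reason to be conditionally centered, and the leave-one-out decomposition does not isolate the bias. The paper instead applies Stein's formula directly to $\tE[(e_i^TQXPe_j)\,e_j^TAx_i]$, sums over $j$, and obtains $\tE[\Rem_i]=\tE[A_i+B_i+C_i+E_i]$ with four explicit correction terms: $e_i^T(X-QXP)Ax_i$, $x_i^TAPAX^TDQe_i$, $\tr[AP]e_i^TQe_i-\tr[A]$, and $e_i^T(\tr[A]I_n-\tr[AP]Q)DXAx_i$. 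The decisive estimates you leave as the ``main obstacle'' are then the rank bound $\|X-QXP\|_F^2\le 4(d_P+d_Q)\|X\|_{op}^2$ (since $X-QXP=(I_n-Q)X+QX(I_p-P)$ has rank at most $d_P+d_Q$) and $\|\tr[A]I_n-\tr[AP]Q\|_F\le\tr[A]\sqrt{d_Q}+\sqrt n\,d_P\|A\|_{op}$, which after summing the squared diagonal entries against squared Frobenius norms produce exactly the $(d_P+d_Q)/\delta$, $d_P^2/(\mu^2 n)$ and $d_Q/(\delta^2\mu^2)$ pieces of \eqref{conclusion_E_Rem_i}. Without these (or equivalent) bounds the theorem's quantitative conclusion is not established; I would therefore drop the $A_{-i}$ detour and carry out the conditional Stein computation on $A$ itself.
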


The proof is given in \Cref{appendix:proof_proba}
In other words, $\Rem_i$ concentrates exponentially fast
(although with $t^{3/2}$ inside the exponential)
around its conditional expectation $\E[\Rem_i\mid E]$.
Using a union bound over $i\in[n]$ taking
$t^{2/3} = C \log(n^2)$ in \eqref{eq:concentre}, we get
with probability $1-C/n$ that
$$
\max_{i\in[n]}
|\Rem_i - \E[\Rem_i\mid E]|
\le c (K+K') \log(n)^{3/2},
\qquad
$$
for some absolute constant $c>0$ and $(K,K')$ are of order $n^{-1/2}$.
The conditional expectations are themselves controlled 
since $\sum_{i=1}^n \E[\Rem_i\mid E]^2$ is bounded in expectation
by a constant when $\mu,\delta,d_P,d_Q$ are kept constant
as in the applications below.
For instance, if $S=\sum_{i=1}^n \E[\Rem_i\mid E]^2$ then
$$
\Big|
\{i\in[n] : \E[\Rem_i\mid E]^2 < \frac{S}{\sqrt n}\}
\Big|
\ge n - \sqrt n
$$
so that all but $\sqrt n$ conditional expectations
$(\E[\Rem_i\mid E])_{i\in[n]}$
are of order $S/\sqrt n = O_P(n^{-1/2})$.
Different than the exponential concentration
argument obtained above,
a different strategy would be to use \cite[(2.6)]{bellec_zhang2018second_stein} to
control $\sum_{i=1}^n\E[\Rem_i^2 \mid E]$ directly.
This leads to a better dependence on $n$ in the upper bound,
but the exponential concentration around $\E[\Rem_i\mid E]$ is lost.
We discuss this approach in \Cref{appendix_alternative}.

To obtain $W_i\approx \tr[\Sigma\hat A]$ from the above,
the last ingredient we will need is that $1-D_{ii}x_i^TAx_i$ in
the definition of $\Rem_i$ \eqref{Rem_i} is bounded away from 0.
This follows, with the notation and assumptions
in \Cref{thm:proba} from
\begin{equation}
    \label{above}
x_i^T A x_i
\le x_i^T (x_i D_i x_i^T + n\mu\Sigma)^{-1} x_i
=
\frac{x_i^T\Sigma^{-1} x_i}{D_{ii}x_i^T\Sigma^{-1} x_i + n\mu}
\end{equation}
by comparing positive definite matrices
inside the inverse,
and solving the linear system with right-hand side $x_i$ using
$$
(x_iD_i x_i^T + n\mu\Sigma) \frac{\Sigma^{-1}x_i}{D_ix_i^T\Sigma^{-1}x_i +  n\mu} = x_i.
$$
Multiplying by $-D_{ii}$ and adding 1,
inequality \eqref{above} is equivalent to
$$
1-D_{ii}x_i^TAx_i
\ge
1 - \frac{D_{ii}x_i^T\Sigma^{-1} x_i}{D_{ii}x_i^T\Sigma^{-1} x_i + n\mu}
= \frac{n\mu}{D_{ii}x_i^T\Sigma^{-1} x_i + n\mu},
$$
so that $1-D_{ii}x_i^TAx_i$ is bounded away from 0 in the sense
\begin{equation}
0 < (1 - D_{ii}x_i^TAx_i)^{-1}
\le 1 + D_{ii}x_i^T\Sigma^{-1}x_i/(n\mu).
\label{weight_bounded_from_1}
\end{equation}

\subsection*{Restriction to a support or a subspace}

In order to apply the above result to \Cref{example:enet,example:group},
it is useful to state the following corollary, where
\begin{equation}
A = \lim_{t\to+\infty}(X^TDX + \Sigma^{1/2}H\Sigma^{1/2} + t W)^{-1}
\label{Alim}
\end{equation}
for some orthogonal projection $W\in\R^{n\times n}$.
If $S\subset [p]$ is a fixed
subset of indices and $W=\sum_{j\in S^c}e_j e_j^T$,
the above limit is given by
\begin{equation}
A_{S,S} = (X_S^T D X_S + (\Sigma^{1/2}H\Sigma^{1/2})_{S,S})^{-1},
\qquad A_{jk} = 0 \text{ if } j\notin S \text{ or } k\notin S,
\label{A_S_S}
\end{equation}
for instance by applying the block inversion formula before 
taking $t\to +\infty$.

\begin{corollary}
    \label{cor:support}
    Let the assumptions of \Cref{thm:proba} be fulfilled.
    Let $W$ be a deterministic orthogonal projection.
    Replace the definition of $A$ by \eqref{Alim}
    in \eqref{thm:proba}.
    Then the conclusions \eqref{eq:concentre},
    \eqref{conclusion_E_Rem_i} and \eqref{weight_bounded_from_1}
    still hold.
\end{corollary}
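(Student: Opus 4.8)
The plan is to realize the limiting matrix $A$ of \eqref{Alim} as the limit of a family of matrices to which \Cref{thm:proba} applies directly, and then to transfer each of its three conclusions by a limiting argument. For $s>0$ set $H_s = H + s\,\Sigma^{-1/2}W\Sigma^{-1/2}$ and
\[
A_s = (X^TDX + \Sigma^{1/2}H_s\Sigma^{1/2})^{-1} = (X^TDX + \Sigma^{1/2}H\Sigma^{1/2} + sW)^{-1},
\]
so that $A = \lim_{s\to\infty}A_s$ by \eqref{Alim}. Since $W$ is an orthogonal projection, $\Sigma^{-1/2}W\Sigma^{-1/2}\succeq 0$, hence $H_s - n\mu I_p = (H-n\mu I_p) + s\,\Sigma^{-1/2}W\Sigma^{-1/2}$ is positive semi-definite for every $s>0$, while $D$ and $X$ are unchanged. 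Thus \Cref{thm:proba} applies verbatim to each $A_s$, giving \eqref{eq:concentre}, \eqref{conclusion_E_Rem_i} and \eqref{weight_bounded_from_1} for the quantity $\Rem_i^{(s)} = x_i^TA_sx_i - \tr[A_s\Sigma](1-D_{ii}x_i^TA_sx_i)$. The key point is that the constants $K,K'$ in \eqref{eq:concentre} and the right-hand side of \eqref{conclusion_E_Rem_i} depend only on $\mu,\delta,d_P,d_Q,n$ and \emph{not} on $H$; since $P,Q$ are unchanged, $d_P$ and $d_Q$ are the same for all $s$, so these bounds are uniform in $s$.

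First I would check existence of the limit \eqref{Alim} and record the explicit form \eqref{A_S_S}. Writing $M = X^TDX + \Sigma^{1/2}H\Sigma^{1/2}$, which is invertible because $M\succeq n\mu\Sigma\succ 0$, a block decomposition and the Schur complement show that $(M+sW)^{-1}$ converges as $s\to\infty$, the limit being the inverse of the compression of $M$ to $\ker W$ extended by zero; for a coordinate projection $W$ this is the restricted inverse recorded in \eqref{A_S_S}. I would also record $A_s\preceq M^{-1}\preceq (n\mu)^{-1}\Sigma^{-1}$ and, after dropping the nonnegative rank-one terms $x_lD_{ll}x_l^T$ for $l\ne i$ from $X^TDX$ and using $\Sigma^{1/2}H\Sigma^{1/2}\succeq n\mu\Sigma$, the sharper bound $M+sW\succeq M\succeq x_iD_{ii}x_i^T + n\mu\Sigma$, whence $A_s\preceq (x_iD_{ii}x_i^T+n\mu\Sigma)^{-1}$ for all $s$; letting $s\to\infty$ the same bound holds for $A$, and the computation already carried out between \eqref{above} and \eqref{weight_bounded_from_1} then yields \eqref{weight_bounded_from_1} for $A$.

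Next I would pass to the limit $s\to\infty$ in the two probabilistic conclusions. For fixed $X$ the convergence $A_s\to A$ is deterministic, hence $\Rem_i^{(s)}\to\Rem_i$ almost surely; moreover $0\le x_i^TA_sx_i\le x_i^T\Sigma^{-1}x_i/(n\mu)$, $0\le\tr[A_s\Sigma]\le p/(n\mu)$ and $0<1-D_{ii}x_i^TA_sx_i\le 1$ give the uniform domination $|\Rem_i^{(s)}|\le x_i^T\Sigma^{-1}x_i/(n\mu)+p/(n\mu)$, whose right-hand side is an affine function of a $\chi^2_p$ variable and hence integrable, both unconditionally and under the conditional law given $E$. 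Dominated convergence for conditional expectations then gives $\E[\Rem_i^{(s)}\mid E]\to\E[\Rem_i\mid E]$ almost surely, so $\Rem_i^{(s)}-\E[\Rem_i^{(s)}\mid E]\to\Rem_i-\E[\Rem_i\mid E]$ almost surely. Since $tK+tK'$ is independent of $s$, Fatou's lemma yields
\[
\mathbb P\bigl(|\Rem_i-\E[\Rem_i\mid E]|>tK+tK'\bigr)\le \liminf_{s\to\infty}\mathbb P\bigl(|\Rem_i^{(s)}-\E[\Rem_i^{(s)}\mid E]|>tK+tK'\bigr)\le 2\exp(-t^2/2)+C\exp(-t^{2/3}/C),
\]
which is \eqref{eq:concentre}. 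Similarly $\E[\Rem_i^{(s)}\mid E]^2\to\E[\Rem_i\mid E]^2$ almost surely, and Fatou applied to the finite sum over $i\in[n]$ gives \eqref{conclusion_E_Rem_i} for $A$ with the same ($s$-independent) right-hand side as in \Cref{thm:proba}.

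I do not expect a genuine obstacle here: the substance is that \Cref{thm:proba} already has $H$-free constants, so restricting to a subspace is just sending a multiple of an orthogonal projection inside the regularized Hessian to infinity. The only steps needing care are the existence of the limit \eqref{Alim} (a standard Schur-complement fact) and arranging the uniform domination so that dominated convergence and Fatou pass through the conditional expectations; everything else is routine.
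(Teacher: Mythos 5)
Your proof is correct and follows essentially the same route as the paper: the paper also defines $A_t=(X^TDX+\Sigma^{1/2}H\Sigma^{1/2}+tW)^{-1}$, applies \Cref{thm:proba} to each $A_t$, and passes to the limit $t\to\infty$ by dominated convergence, exactly as you do. Your write-up merely fills in details the paper leaves implicit (that $H+s\Sigma^{-1/2}W\Sigma^{-1/2}$ still satisfies the hypotheses, that the constants are $H$-free hence uniform in $s$, and the domination/Fatou steps), all of which are sound.
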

With $A_t = (X^TDX + \Sigma^{1/2}H\Sigma^{1/2} + t W)^{-1}$
for $t\ge 0$, we have
the monotone limits
$\tr[\Sigma^{1/2} A_t \Sigma^{1/2}]\downarrow \tr[\Sigma^{1/2}A\Sigma^{1/2}]$ and
$x_i^T A_t x_i \downarrow x_i^T A x_i$
as $t\uparrow+\infty$.
Applying \Cref{thm:proba} to $A_t$ and taking $t\to+\infty$
using the dominated convergence theorem gives \Cref{cor:support}.

\subsection*{What is the catch?}
The notation and setting
used so far in \Cref{sec:proba}
is seemingly detached from the notation in
\Cref{example:enet,example:group} or \Cref{prop:mean_field}.
In the result \Cref{thm:proba}, the matrices
$D$ and $H$ are deterministic, and the support 
$S$ in \eqref{A_S_S} is also deterministic
for \Cref{cor:support} to hold with the deterministic
projection $W=\sum_{j\in S^c}e_j e_j^T$.

In the setting discussed in the introduction,
the corresponding quantities are random.
With \Cref{example:twice}, we are interested in
$$
\hat A = (X^TD X  + \nabla^2 R(\hat b))^{-1}
$$
which is random because $\nabla^2 R(\hat b)$ is random through
$\hat b$, and $D=\diag(\{L_{y_i}''(x_i^T\hat b), i\in[n]\})$ is also
random through $L_{y_i}''(x_i^T\hat b), i\in[n]$.
In \Cref{example:enet,example:group}, the support $\hat S=
\{j\in[p]:\hat b_j\ne 0\}$ is random as well,
and it is unclear at this point why \Cref{thm:proba}
and \Cref{cor:support} with a deterministic $S\subset[p]$ in
\eqref{A_S_S} would be useful at all.
This will be resolved by considering particular events
of the form \eqref{E}, such that in this event
$D,\hat b, \nabla^2 R(\hat b)$, and in the case
of \Cref{example:enet,example:group}
the support $\hat S$,
are all fully determined (i.e., they are 
conditionally deterministic given an event of the form \eqref{E}).
Such events have been studied in 
\cite{celentano2021cad,celentano2023challenges}
and are described in the following sections.

\section{Conditioning example I: Robust linear regression}
Before we move to generalized linear models and 
single-index models in \Cref{sec:glm},
let us apply the main result 
in \Cref{thm:proba} to 
robust linear regression, where a linear model
\begin{equation}
    \label{linear_model}
y_i = x_i^T\beta^* + \eps_i
\end{equation}
is assumed, with $x_i\sim N(0, I_p)$ and $\eps_i$ is
independent of $X$.
Consider throughout this section a differentiable robust loss function
$\rho:\R\to\R$ such as the Huber loss, so that the minimization problem
is
\begin{equation}
    \label{hat_b_robust}
\hat b = \argmin_{b\in \R^p}
\sum_{i=1}^n \rho(y_i - x_i^T\hat b )
+ R(b)
\end{equation}

The key intuition behind the argument of \Cref{prop:square_loss}
is that in the square loss case, if
$Q\Lambda P^T$ is the SVD of the design matrix $[X\mid \eps]\in\R^{n\times (p+1)}$, then
$\hat b$ and $\tr[H]$ are independent of the left singular vectors $Q$.
We can then argue conditionally on $(\Lambda,P)$ with respect to
the randomness of the randomness of the independent matrix $Q$.

This argument fails as soon as the loss $L_{y_i}(\cdot)$
is not the square loss: the rotational invariance 
argued in \eqref{argue_rotational_inv} is lost.
Instead, we argue by conditioning on the following event.
Let us argue conditionally on the noise $\eps$ (which is independent of $X$),
so that $\eps$ is fixed. The only remaining randomness comes from the
design matrix $X$ that we assume has iid $N(0,\Sigma)$ rows.
Consider deterministic $\bar h\in\R^p,  \bar u\in\R^n,
\bar v \in\R^n,
\bar w\in\R^p$ with
\begin{equation}
\bar w\in\partial R(\bar h + \beta^*),
\qquad
\bar v_i = \rho'(\eps_i - \bar u_i).
\label{constraint_v_u_h_w}
\end{equation}
Then consider the event
\begin{equation}
X\bar h = \bar u,
\quad
X^T\bar v = \bar w.
\label{michael_event}
\end{equation}
This event is studied in \cite{celentano2021cad,celentano2023challenges}
in order to apply the Convex Gaussian Minmax Theorem of
\cite{thrampoulidis2018precise} sequentially.
The first key observation made in \cite{celentano2021cad,celentano2023challenges} is that in the event \eqref{michael_event},
the vector $\bar h + \beta^*$ is solution to the optimization problem
\eqref{hat_b_robust} because the KKT conditions
$$
X^T\bar v
=
\sum_{i=1}^n x_i \rho'\Bigl(y_i - x_i^T(\bar h + \beta^*)\Bigr) \in \partial R(\bar h + \beta^*)
$$
are satisfied. Hence in this event
$\hat b = \bar h + \beta^*$, $X(\hat b - \beta^*) = \bar u$
and $\bar w$ is an element of $\partial R(\hat b)$ that satisfies
the KKT conditions for $\hat b$. 
The second key observation made in 
\cite{celentano2021cad,celentano2023challenges}
is that the event \eqref{michael_event}
is made of linear constraints on the Gaussian matrix $X$.
By properties of the multivariate normal distribution,
conditionally on \eqref{michael_event},
the entries of $X$ are again jointly normal.

The event \eqref{michael_event} lets us essentially condition
on $\hat b, X(\hat b - \beta^*)$ and $X^T\rho'(y-X\hat b)$,
while maintaining that the entries of $X$ are jointly normal
(although with different mean and covariance than the original
random matrix).

\begin{corollary}
    \label{cor_robust}
    Let $\mu,\delta>0$ 
    be constants. Assume the proportional regime \eqref{proportional_regime}.
    Consider the robust regression setting 
    \eqref{linear_model}-\eqref{linear_model}
    with $\eps$ independent of $X$
    and loss $\rho$ differentiable with $\rho'$ 1-Lipschitz.
    Assume that $X$ has iid $N(0,\Sigma)$ rows
    with $\Sigma$ invertible.
    Consider one of the regularizer
    in \Cref{example:twice,example:enet,example:group}
    and assume $\nu \|\Sigma\|_{op} \ge \mu$ 
    so that the strong convexity \eqref{mu-n-strongly-convex} holds.
    For the explicit matrix $\hat A$ given 
    in \Cref{example:twice,example:enet,example:group} we have
    \begin{equation}
    \frac 1 n
    \sum_{i=1}^n
    \Big|
    W_i
    - \tr[\hat A\Sigma]
    \Big|^2
    =
    \frac 1 n
    \sum_{i=1}^n
    \Big|
    \frac{x_i^T \hat A x_i}{1-D_{ii}x_i^T \hat A x_i}
    - \tr[\hat A\Sigma]
    \Big|^2
    \le \frac{\polylog_{\mu,\delta}(n)}{n}.
    \label{robust_regression_weight}
    \end{equation}
    with probability approaching one,
    where $\polylog_{\mu,\delta}$ is a polynomial in $\log(n)$
    with coefficients depending on $\mu,\delta$ only.
    If additionally the regularizer
    $R(\cdot)$ is minimized at 0, the loss
    $\rho$ is 1-Lipschitz and the test function
    $g$ satisfies
    $\sup_{y\in\mathcal Y}|g(x,y) - g(x,y')|\le |x-x'|(1+|x|+|x'|)$
    then
    \begin{equation}
    \Big|
    \loo
    -
    \frac1n
    \sum_{i=1}^n
    g\Bigl(
        x_i^T\hat b + \frac{x_i^T\hat Ax_i}{1-D_{ii}x_i^T\hat Ax_i}
        L_{y_i}'(x_i^T\hat b),
    ~
    y_i
    \Bigr)
    \Big|
    \le \frac{\polylog_{\mu,\delta}(n)}{\sqrt n}.
    \label{robst_conclusion_LOO}
    \end{equation}
    with probability converging to 1 as $n,p\to+\infty$.
    Above, $\loo$ is defined in \eqref{loo}.
\end{corollary}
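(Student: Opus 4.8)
The plan is to realize \Cref{cor_robust} as a direct consequence of \Cref{thm:proba} (for the twice-differentiable regularizer of \Cref{example:twice}) or of \Cref{cor:support} (for the elastic net of \Cref{example:enet} and the group lasso of \Cref{example:group}), obtained by conditioning on the event \eqref{michael_event}, and then to combine the resulting control of $W_i-\tr[\hat A\Sigma]$ with \Cref{prop:mean_field}. First I would work conditionally on the noise $\eps$, which is fixed and independent of $X$, and define from the realized design the parameters $\bar h=\hat b-\beta^*$, $\bar u=X\bar h$, $\bar v=(\rho'(\eps_i-\bar u_i))_{i\in[n]}$, $\bar w=X^T\bar v$; these satisfy \eqref{constraint_v_u_h_w}, and on the event \eqref{michael_event} the KKT computation recalled just above gives $\hat b=\bar h+\beta^*$ and $y-X\hat b=\eps-\bar u$, so that $D=\diag(\rho''(\eps_i-\bar u_i)_{i\in[n]})$ has entries in $[0,1]$, and $\nabla^2R(\hat b)$, the support $\hat S=\{j:\hat b_j\ne0\}$ and $L_{y_i}'(x_i^T\hat b)=-\rho'(\eps_i-\bar u_i)$ are all deterministic given $(\bar h,\bar u)$. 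Taking $P$ to be the $\Sigma$-orthogonal projection with $\ker P=\operatorname{span}(\bar h)$ and $Q$ the orthogonal projection with $\ker Q=\operatorname{span}(\bar v)$, so that $\Sigma^{1/2}P\Sigma^{-1/2}$ and $Q$ are orthogonal projections with $d_P=d_Q=1$, the event \eqref{michael_event} is exactly the event $E$ of \eqref{E} with $\bar U$ and $\bar V$ equal to $X(I_p-P)$ and $(I_n-Q)X$ on \eqref{michael_event}.

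With this identification, I would apply \Cref{thm:proba} to this $D$ and to $H=\Sigma^{-1/2}\nabla^2R(\hat b)\Sigma^{-1/2}$, for which $H-n\mu I_p\succeq0$ by \eqref{mu-n-strongly-convex} and for which the matrix $A$ of \Cref{thm:proba} is the $\hat A$ of \Cref{example:twice}; in the elastic-net and group-lasso cases I would instead invoke \Cref{cor:support} with $W=\sum_{j\in\hat S}e_je_j^T$, for which \eqref{A_S_S} reproduces the explicit $\hat A$ of \Cref{example:enet,example:group}. Conditionally on $E$, a union bound over $i\in[n]$ in \eqref{eq:concentre} with $t^{2/3}=C\log(n^2)$ gives $\max_{i\in[n]}|\Rem_i-\E[\Rem_i\mid E]|\le C(\delta,\mu)(\log n)^{3/2}/\sqrt n$ with conditional probability at least $1-C/n$, while \eqref{conclusion_E_Rem_i} with $d_P=d_Q=1$ bounds $\sum_{i\in[n]}\E[\Rem_i\mid E]^2$ by $C(\delta,\mu)$. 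Since these constants depend only on $\mu$ and $\delta$, the same estimates hold for the data-determined event $E$ with unconditional probability approaching one, without any union bound over the admissible parameters $(\bar h,\bar u,\bar v,\bar w)$.

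For \eqref{robust_regression_weight} I would use $W_i-\tr[\hat A\Sigma]=\Rem_i/(1-D_{ii}x_i^T\hat Ax_i)$ from \eqref{Rem_i} together with $(1-D_{ii}x_i^T\hat Ax_i)^{-1}\le1+D_{ii}x_i^T\Sigma^{-1}x_i/(n\mu)$ from \eqref{weight_bounded_from_1}, so that $\frac1n\sum_i|W_i-\tr[\hat A\Sigma]|^2\le\frac1n\sum_i|\Rem_i|^2(1+x_i^T\Sigma^{-1}x_i/(n\mu))^2$; splitting $|\Rem_i|^2\le2\E[\Rem_i\mid E]^2+2|\Rem_i-\E[\Rem_i\mid E]|^2$, using $\max_ix_i^T\Sigma^{-1}x_i=O(p+\log n)$ and $\frac1n\sum_i(x_i^T\Sigma^{-1}x_i)^2=O(p^2)$ with probability approaching one by $\chi^2$ tail bounds, and inserting the two estimates of the previous paragraph, I would obtain $\frac1n\sum_i|W_i-\tr[\hat A\Sigma]|^2\le\polylog_{\mu,\delta}(n)/n$. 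For \eqref{robst_conclusion_LOO} I would first check that robust linear regression is a single-index model in the sense of \Cref{prop:mean_field} with $w=\beta^*/\|\Sigma^{1/2}\beta^*\|_2$ (or $w=0$ when $\beta^*=0$), that under the extra hypotheses $L_y(t)=\rho(y-t)$ and $L_y'$ are $1$-Lipschitz and $R$ is $(n\mu,\Sigma)$-strongly convex and minimized at $0$, and that the explicit $\hat A$ is the matrix in the derivative formula of \Cref{prop:mean_field}; then \eqref{bound_mean_field_LOO} together with Markov controls $\loo$ by $\frac1n\sum_ig(x_i^T\hat b+\tr[\Sigma\hat A]L_{y_i}'(x_i^T\hat b),y_i)$ up to $\polylog_{\mu,\delta}(n)/\sqrt n$, and the growth assumption on $g$ and the Cauchy--Schwarz inequality bound the cost of replacing $\tr[\Sigma\hat A]$ by $W_i$ by $(\frac1n\sum_i|W_i-\tr[\Sigma\hat A]|^2)^{1/2}(\frac1n\sum_i(1+2|x_i^T\hat b|+\tr[\Sigma\hat A]+|W_i|)^2)^{1/2}$, whose first factor is $\polylog_{\mu,\delta}(n)/\sqrt n$ by \eqref{robust_regression_weight} and whose second factor is bounded by a constant depending on $\mu,\delta$ because $\frac1n\|X\hat b\|_2^2\le\|X\Sigma^{-1/2}\|_{op}^2\|\Sigma^{1/2}\hat b\|_2^2/n=O(1/\mu^2)$ by \eqref{bound_b_hat}, $\tr[\Sigma\hat A]=\tr[\Sigma^{1/2}\hat A\Sigma^{1/2}]\le p/(n\mu)\le1/(\delta\mu)$, and $\max_i|W_i|\le\tr[\Sigma\hat A]+\max_i|W_i-\tr[\Sigma\hat A]|$; the triangle inequality then yields \eqref{robst_conclusion_LOO}.

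The step I expect to be the main obstacle is the conditioning argument of the first two paragraphs: the event \eqref{michael_event} has probability zero and its defining parameters are themselves functions of $X$, so one must argue carefully that conditionally on $E$ the matrix $X$ remains jointly Gaussian (the observation of \cite{celentano2021cad,celentano2023challenges}), that on $E$ the quantities $D$, $\nabla^2R(\hat b)$, $\hat S$, $L_{y_i}'(x_i^T\hat b)$ become genuinely deterministic so that the deterministic hypotheses of \Cref{thm:proba} are met, and that the $(\mu,\delta)$-only dependence of the constants in \eqref{eq:concentre}--\eqref{conclusion_E_Rem_i} is what lets the conditional estimates be promoted to unconditional ones for the realized $E$. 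Once this structural point is granted, the rest is a routine combination of \Cref{thm:proba}, \Cref{cor:support} and \Cref{prop:mean_field} with $\chi^2$- and operator-norm tail bounds.
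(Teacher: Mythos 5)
Your proposal is correct and follows essentially the same route as the paper: conditioning on the event \eqref{michael_event} with rank-one kernels $d_P=d_Q=1$, applying \Cref{thm:proba} (or \Cref{cor:support} with $W=\sum_{j\in\hat S}e_je_j^T$), combining \eqref{eq:concentre}, \eqref{conclusion_E_Rem_i} via Markov, and \eqref{weight_bounded_from_1} with $\chi^2$ tails, then integrating over the law of $(\hat b-\beta^*, X(\hat b-\beta^*),\rho'(y-X\hat b),X^T\rho'(y-X\hat b),\eps)$, and finally invoking \eqref{bound_mean_field_LOO} plus Cauchy--Schwarz for \eqref{robst_conclusion_LOO}. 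The only differences are cosmetic: you spell out the identity $W_i-\tr[\hat A\Sigma]=\Rem_i/(1-D_{ii}x_i^T\hat Ax_i)$ and the single-index verification for \Cref{prop:mean_field} more explicitly than the paper does, and your Markov threshold $t=C\log(n)/\sqrt n$ is in fact slightly sharper than the paper's $n^{-1/4}$.
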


Together with the bound $|\loo - \Err(\hat b)|=O_P(n^{-1/2})$
from \cite{zou2024theoretical}
for the quantities \eqref{loo}, \eqref{err_b_hat}, the above establishes the consistency
of the ALO-CV estimates \eqref{alo} for estimating the generalization
error \eqref{err_b_hat}.

\begin{proof}
    Since  $\eps$ is independent of $X$, we argue conditionally on $\eps$
    and consider $\eps$ fixed in what follows.
    Consider deterministic
    $\bar h\in\R^p,  \bar u\in\R^n,
    \bar v \in\R^n,
    \bar w\in\R^p$
    such that \eqref{constraint_v_u_h_w} holds,
    and consider the event \eqref{michael_event}.
    Conditionally on this event, $\hat b, D$ and $\hat S$ are deterministic.
    Choose $P,Q$ in \Cref{thm:proba} to be projections of 
    rank $p-1$ and $n-1$ such that the event \eqref{E} is the same 
    as \eqref{michael_event}.
    In the case of \Cref{example:enet,example:group},
    apply \Cref{cor:support} with $W=\sum_{j\in \hat S^c} e_j e_j^T$,
    where $\hat S$ is conditionally deterministic given
    the event \eqref{E}.
    The bound then follows from \eqref{eq:concentre},
    \eqref{conclusion_E_Rem_i} combined with Markov's inequality,
    and \eqref{weight_bounded_from_1}
    combined with
    $\mathbb P(x_i^T\Sigma^{-1}x_i > 4p) \le \exp(-p)$
    \cite[Lemma 1]{laurent2000adaptive}
    and the union bound over $i\in[n]$.
    Once the result is proved conditionally on \eqref{E},
    it remains to integrate over $(\bar h, \bar u, \bar v, \bar w,\eps)$
    with respect to the probability distribution
    of $(\hat b - \beta^*, X(\hat b - \beta^*), \rho(y-X\hat b), X^T\rho'(y-X\hat b),\eps)$.

    Once \eqref{robust_regression_weight} is proved,
    to show \eqref{robst_conclusion_LOO}
    we apply \eqref{bound_mean_field_LOO}
    and use Markov's inequality to obtain that the
    integrand in the left-hand side of 
    \eqref{bound_mean_field_LOO} is smaller than $n^{-1/4}$
    with probability approaching one. By the assumed
    properties of $g$ we have thanks to $|L_{y_i}'(x_i^T\hat b)|\le1$
    and the Cauchy-Schwarz inequality
    \begin{align*}
    &\frac1n
    \sum_{i=1}^n
    g\Bigl(
        x_i^T\hat b + \frac{x_i^T\hat Ax_i}{1-D_{ii}x_i^T\hat Ax_i}
        L_{y_i}'(x_i^T\hat b),
    ~
    y_i
    \Bigr)
    -
    \frac1n
    \sum_{i=1}^n
    g\Bigl(
        x_i^T\hat b +
        \tr[\hat A \Sigma]
        L_{y_i}'(x_i^T\hat b),
        ~
        y_i
    \Bigr)
  \\&\le
  \sqrt{\eqref{robust_regression_weight}}
  \Bigl(
  \frac 1 n 
  \sum_{i=1}^n
  1
  +
  \Bigl(x_i^T\hat b
  + W_i L_{y_i}(x_i^T\hat b)
  \Bigr)^2
  +
  \Bigl(x_i^T\hat b
  + \tr[\Sigma\hat A] L_{y_i}(x_i^T\hat b)
  \Bigr)^2
    \Bigr)^{1/2}.
    \end{align*}
    By $\tr[\Sigma \hat A] \le \frac{p}{n\mu}$, 
    using \eqref{robust_regression_weight} to bound $W_i$,
    inequality $|L_{y_i}'(x_i^T\hat b)|\le 1$ by assumption on the loss
    and \eqref{bound_b_hat} to bound $\frac1n\|X\hat b\|_2^2$
    we obtain \eqref{robst_conclusion_LOO}.
\end{proof}

\section{Conditioning example II: Single-index models}
\label{sec:glm}

In this section, consider a single index model for $y_i\mid x_i$,
of the form
\begin{equation}
    \label{single_index}
    y_i = F(x_i^Tw, \eps_i)
\end{equation}
where $\eps_i$ is some external randomness independent of $X$,
and $w\in\R^p$ is an unknown deterministic index, normalized
with $\E[(x_i^Tw)^2]=1$ by convention (as the amplitude
of $w$ can be otherwise absorbed into $F$).
In binary classification, we may for instance take $\eps_i\sim$Unif$[0,1]$
and $y_i = I\{ U_i \le \phi(x_i^TW) \}$
where $\phi:\R\to[0,1]$ is the sigmoid (logistic regression)
or the Gaussian CDF (probit regression).

In robust linear regression with
\eqref{linear_model}-\eqref{hat_b_robust},
there is only direction $\bar h\in\R^p$ that we need
to condition upon on the right of $X$ in \eqref{michael_event},
because in this event (and for fixed $\eps$),
$(y_i, L_{y_i}, x_i^T(\hat b-\beta^*), y_i, \hat b-\beta^*, \hat b)$
are all deterministic.
In single index model, the nonlinearity requires us to condition
on another direction on the right of $X$.

By conditioning, we may consider the external randomness $\eps_i$
fixed and argue only with respect to the randomness of $X$.
Consider,
for deterministic
$\bar b,  \bar u, \bar u'\in\R^n,
\bar v \in\R^n,
\bar h\in \R^p,
\bar y\in \mathcal Y^n
$ such that
$$
\bar y_i = F(\bar u'_i, \eps_i),
\qquad
\bar v_i = L_{\bar  y_i}'(\bar u_i),
\qquad
\bar h
\in \partial R(\bar b)
$$
and the event
\begin{equation}
\label{michael_event_single}
E
=\{
X\bar b = \bar u,
\quad
X w = \bar u',
\quad
X^T\bar v = \bar h
\}.
\end{equation}
In this event, we have by construction that $y_i=\bar y_i$
and that $\bar b$ satisfies the KKT conditions of the problem
\eqref{hat_b}, hence $\hat b = \bar v$ and
$x_i^T\hat b = \bar u_i$.
Conditionally in this event, the responses $y_i$, the loss $L_{y_i}(\cdot)$,
the estimator $\hat b$ and its predicted values $x_i^T\hat b$,
the matrix $D=\diag(\{L_{y_i}''(x_i^T\hat b), i\in[n]\})$
and the support $\hat S=\{j\in[p]:\hat b_j\ne 0\}$
are all fully determined and deterministic.

\begin{corollary}
    \label{cor_single}
    Let $\mu,\delta>0$ 
    be constants. Assume the proportional regime \eqref{proportional_regime}.
    Consider the single index model
    \eqref{single_index}
    with $\eps$ independent of $X$,
    and the estimator $\hat b$ in
    \eqref{hat_b}.
    Assume that $X$ has iid $N(0,\Sigma)$ rows
    with $\Sigma$ invertible.
    Assume that for all $y\in\mathcal Y$,
    the function $L_{y}(\cdot)$ is differentiable
    with $L_{y}'(\cdot)$ 1-Lipschitz.
    Consider one of the regularizer
    in \Cref{example:twice,example:enet,example:group}
    and assume $\nu \|\Sigma\|_{op} \ge \mu$ 
    so that the strong convexity \eqref{mu-n-strongly-convex} holds.
    For the explicit matrix $\hat A$ given 
    in \Cref{example:twice,example:enet,example:group} we have
    \begin{equation}
    \frac 1 n
    \sum_{i=1}^n
    \Big|
    W_i
    - \tr[\hat A\Sigma]
    \Big|^2
    =
    \frac 1 n
    \sum_{i=1}^n
    \Big|
    \frac{x_i^T \hat A x_i}{1-D_{ii}x_i^T \hat A x_i}
    - \tr[\hat A\Sigma]
    \Big|^2
    \le \frac{\polylog_{\mu,\delta}(n)}{n}.
    \label{single_regression_weight}
    \end{equation}
    with probability approaching one.
    If additionally the regularizer
    $R(\cdot)$ is minimized at 0, the loss
    $y\mapsto L_y(\cdot)$ is 1-Lipschitz for all $y\in\mathcal Y$
    and the test function
    $g$ satisfies
    $\sup_{y\in\mathcal Y}|g(x,y) - g(x,y')|\le |x-x'|(1+|x|+|x'|)$
    then
    \begin{equation}
    \Big|
    \loo
    -
    \frac1n
    \sum_{i=1}^n
    g\Bigl(
        x_i^T\hat b + \frac{x_i^T\hat Ax_i}{1-D_{ii}x_i^T\hat Ax_i}
        L_{y_i}'(x_i^T\hat b),
    ~
    y_i
    \Bigr)
    \Big|
    \le \frac{\polylog_{\mu,\delta}(n)}{\sqrt n}.
    \label{single_conclusion_LOO}
    \end{equation}
    with probability converging to 1 as $n,p\to+\infty$.
    Above, $\loo$ is defined in \eqref{loo}.
\end{corollary}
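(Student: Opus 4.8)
The plan is to follow the proof of \Cref{cor_robust} almost verbatim; the only structural novelty is that the single-index nonlinearity forces conditioning on \emph{two} directions on the right of $X$ — the estimator direction $\bar b$ and the ground-truth index $w$ — rather than the single direction appearing in robust linear regression. First I would condition on the external randomness $\eps=(\eps_i)_{i\in[n]}$, which is independent of $X$, and treat it as fixed. For deterministic $(\bar b,\bar u,\bar u',\bar v,\bar h,\bar y)$ satisfying $\bar y_i=F(\bar u_i',\eps_i)$, $\bar v_i=L_{\bar y_i}'(\bar u_i)$ and $\bar h\in\partial R(\bar b)$, I would work on the event $E$ in \eqref{michael_event_single}. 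As recalled in the text, following \cite{celentano2021cad,celentano2023challenges}, on $E$ the KKT conditions force $\hat b=\bar b$, hence $x_i^T\hat b=\bar u_i$, $y_i=\bar y_i$, $D=\diag(L_{\bar y_i}''(\bar u_i))_{i\in[n]}$, the Hessian $\nabla^2R(\hat b)$, and the support $\hat S=\{j:\bar b_j\ne0\}$ are all deterministic, while the conditional law of $X$ given $E$ is again Gaussian subject to linear constraints — the exact setting of \Cref{thm:proba}.

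Next I would instantiate \Cref{thm:proba}. Take $Q\in\R^{n\times n}$ the orthogonal projection onto $\bar v^{\perp}$, so that $(I_n-Q)X=\bar v\bar h^T/\|\bar v\|_2^2$ encodes $X^T\bar v=\bar h$ and $d_Q=1$; take $P\in\R^{p\times p}$ so that $\Sigma^{1/2}P\Sigma^{-1/2}$ is the orthogonal projection onto the orthocomplement of $\operatorname{span}\{\Sigma^{1/2}\bar b,\Sigma^{1/2}w\}$, so that $X(I_p-P)$ encodes $\{X\bar b=\bar u,\ Xw=\bar u'\}$ and $d_P=\dim\operatorname{span}\{\bar b,w\}\le 2$. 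Set $\Sigma^{1/2}H\Sigma^{1/2}=\nabla^2R(\hat b)$ for \Cref{example:twice}; the $(n\mu,\Sigma)$-strong convexity assumed through the condition on $\nu$ is precisely $\nabla^2R(\hat b)\succeq n\mu\Sigma$, i.e.\ $H-n\mu I_p\succeq0$. For \Cref{example:enet,example:group} I would instead use \Cref{cor:support} with the deterministic projection $W=\sum_{j\in\hat S}e_je_j^T$, so that $A$ in \eqref{Alim} coincides with the $\hat A$ of those examples. \Cref{thm:proba}/\Cref{cor:support} then give the exponential concentration \eqref{eq:concentre} of $\Rem_i$ about $\E[\Rem_i\mid E]$ with $K+K'=O_{\mu,\delta}(n^{-1/2})$, together with \eqref{conclusion_E_Rem_i}, whose right-hand side is $O_{\mu,\delta}(1)$ because $d_P\le2$ and $d_Q=1$. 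A union bound over $i\in[n]$ with $t^{2/3}\asymp\log n$ gives $\max_i|\Rem_i-\E[\Rem_i\mid E]|\lesssim_{\mu,\delta}(\log n)^{3/2}/\sqrt n$ with probability $1-O(n^{-1})$, and Markov's inequality applied to \eqref{conclusion_E_Rem_i} (now integrated over the law of the conditioning quantities $\hat b,X\hat b,Xw,X^T(L_{y_i}'(x_i^T\hat b))_{i\in[n]},y$, i.e.\ removing the conditioning) gives $\sum_i\E[\Rem_i\mid E]^2\le\polylog_{\mu,\delta}(n)$ with probability $\to1$; together these force $\tfrac1n\sum_i\Rem_i^2\le\polylog_{\mu,\delta}(n)/n$. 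Finally, dividing the definition \eqref{Rem_i} of $\Rem_i$ by $1-D_{ii}x_i^T\hat Ax_i$ yields $W_i-\tr[\hat A\Sigma]=\Rem_i/(1-D_{ii}x_i^T\hat Ax_i)$, and \eqref{weight_bounded_from_1} combined with the tail bound $\mathbb P(x_i^T\Sigma^{-1}x_i>4p)\le e^{-p}$ \cite{laurent2000adaptive} and $p\le n/\delta$ shows $(1-D_{ii}x_i^T\hat Ax_i)^{-1}\le 1+4/(\delta\mu)$ for all $i$ with probability $\to1$; this converts the bound on $\tfrac1n\sum_i\Rem_i^2$ into \eqref{single_regression_weight}.

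For the leave-one-out conclusion \eqref{single_conclusion_LOO} I would first check that the hypotheses of \Cref{prop:mean_field} hold here: since $x_i\sim N(0,\Sigma)$ with $\E[(x_i^Tw)^2]=1$, the vector $(I_p-\Sigma ww^T)x_i$ has zero covariance with $x_i^Tw$, hence is independent of $x_i^Tw$ by joint Gaussianity and independent of $\eps_i$, so independent of $(x_i^Tw,y_i)$; the remaining hypotheses — 1-Lipschitz $L_y$ with 1-Lipschitz $L_y'$, $(n\mu,\Sigma)$-strong convexity, $R$ minimized at $0$ — are assumed in the ``if additionally'' clause. Then \eqref{bound_mean_field_LOO} bounds $\E\big|\loo-\tfrac1n\sum_i g(x_i^T\hat b+\tr[\Sigma\hat A]L_{y_i}'(x_i^T\hat b),y_i)\big|$ by $C(\mu,\delta)/\sqrt n$, so by Markov this quantity is $\le\log(n)/\sqrt n$ with probability $\to1$. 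It remains to replace $\tr[\Sigma\hat A]$ by $W_i$ inside $g$: using the regularity $|g(a,y)-g(a',y)|\le|a-a'|(1+|a|+|a'|)$ with $|a-a'|=|W_i-\tr[\Sigma\hat A]|\,|L_{y_i}'(x_i^T\hat b)|\le|W_i-\tr[\Sigma\hat A]|$ and the Cauchy--Schwarz inequality, the difference of the two empirical $g$-averages is at most $\sqrt{\eqref{single_regression_weight}}$ times $(\tfrac1n\sum_i(1+|a_i|+|a_i'|)^2)^{1/2}$, and this last factor is $O_{\mu,\delta}(1)$ with probability $\to1$: indeed $\tr[\Sigma\hat A]\le p/(n\mu)\le1/(\delta\mu)$ by \eqref{bound-A}; $\tfrac1n\|X\hat b\|_2^2\le\|X\Sigma^{-1/2}\|_{op}^2\|\Sigma^{1/2}\hat b\|_2^2/n$, which is $O_{\mu,\delta}(1)$ with high probability using \eqref{bound_b_hat}, $|L_{y_l}'|\le1$ and $\|X\Sigma^{-1/2}\|_{op}^2=O(n)$; and $\tfrac1n\sum_iW_i^2$ is at most twice the left-hand side of \eqref{single_regression_weight} plus $2\tr[\Sigma\hat A]^2=O_{\mu,\delta}(1)$. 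Combining with the Markov bound on \eqref{bound_mean_field_LOO} gives \eqref{single_conclusion_LOO}.

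The heavy lifting is done by \Cref{thm:proba}, so the only genuinely new point — and the place to be careful — is that the single-index nonlinearity enlarges the conditioning event by one right-direction. One must verify that $d_P=\dim\operatorname{span}\{\bar b,w\}$ remains $O(1)$ (it is at most $2$), which is exactly what keeps the right-hand side of \eqref{conclusion_E_Rem_i} bounded and hence the $\polylog$ factors $\polylog$, and that $\{X\bar b=\bar u,\ Xw=\bar u'\}$ and $\{X^T\bar v=\bar h\}$ are expressible as the linear constraints \eqref{E} of \Cref{thm:proba} through the projections $P,Q$ above. Everything downstream is the same bookkeeping as in \Cref{cor_robust}.
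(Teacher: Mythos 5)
Your proposal is correct and follows essentially the same route as the paper, which simply declares the proof identical to that of \Cref{cor_robust} except that $P,Q$ are taken of rank $p-2$ and $n-1$ so that \eqref{E} matches \eqref{michael_event_single} (i.e.\ $d_P=2$, $d_Q=1$). Your additional details — the explicit construction of $P$ and $Q$ from $\operatorname{span}\{\bar b,w\}$ and $\bar v$, the identification of $H$ and of $W$ for \Cref{example:enet,example:group}, and the verification of the independence hypothesis of \Cref{prop:mean_field} — are all consistent with what the paper's argument implicitly relies on.
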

The proof is exactly the same as in \Cref{cor_robust}
with the following difference:
Here we choose
choose $P,Q$ in \Cref{thm:proba} be projections of 
rank $p-2$ and $n-1$ such that the event \eqref{E} is the same 
as \eqref{michael_event_single}.

\appendix

\section{Proof of
\Cref{thm:proba}}
\label{appendix:proof_proba}

First we need the following proposition

\begin{proposition}
    \label{prop:concentre}
    Let $i\in[n]$ be fixed.
    Let $D\in\R^{n\times n}$ be a
    deterministic diagonal matrix with entries in $[0,1]$
    and let $H\in\R^{p\times p}$ be a symmetric
    positive definite
    matrix.
    Define the functions $f,F:\R^{n\times p}\to\R$ as
    $$
    f(X) = D_{ii} e_i^T X (X^TDX + H)^{-1} X^T e_i,
    \qquad
    F(X) = \tr[(X^TDX + H)^{-1}].
    $$
    Then $f$ is $4\|H^{-1}\|_{op}^{1/2}$-Lipschitz
    and $F$ is $2 \sqrt p \|H^{-1}\|_{op}^{3/2}$-Lipschitz,
    both with respect to the Frobenius norm.
    Furthermore, for all $X$ we have
    $|F(X)|\le p \|H^{-1}\|_{op}$
    and $0\le f(X) \le 1$ hence the function
    \begin{equation}
        \label{final_claim_mapsto}
    X\mapsto 
    F(X) (1-f(X))
    \end{equation}
    is $K$-Lipschitz 
    for $K=\|f\|_{lip}p\|H\|^{-1}\|_{op}
    + \|F\|_{lip}
    \le
    4p\|H^{-1}\|_{op}^{3/2} + 2\sqrt{p} \|H^{-1}\|_{op}^{3/2}$.
    Let $K' \coloneq \|H^{-1}\|_{op}\sqrt p \max\{1,\|H^{-1}\|_{op}^{1/2}\sqrt p\}$.
    The function
    $h: X\mapsto \frac{f(X)}{K' D_{ii}}=\frac{x_i^T(X^TDX+H)^{-1}x_i}{K'}$
    satisfies
    \begin{equation}
    \label{grad_h}
    \frac{d}{dt} h(X+t \dot X)
    \Big|_{t=0}
    =
    \frac{1}{K'}
    \Bigl(
    2 e_i^T\dot X  A X^Te_i
    -
    2 e_i^T XA (X^TD\dot X + \dot X^T DX) A X^Te_i
    \Bigr)
    \end{equation}
    which is bounded as 
    \begin{align*}
    \frac{1}{\|\dot X\|_{op}}
    \Big|
    \frac{d}{dt} h(X+t \dot X)
    \big|_{t=0}
    \Big|
    &\le
    \frac{2\|x_i\|_2 \|H^{-1}\|_{op}
    +2\|x_i\|_2^2 \|H^{-1}\|_{op}^{3/2}}{K'}
    \le 1 + \frac{3\|x_i\|_2^2}{p}.
    \end{align*}
\end{proposition}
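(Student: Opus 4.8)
The plan is to obtain every bound by computing a directional derivative along $X\mapsto X+t\dot X$ and estimating it, writing throughout $A=A(X)=(X^TDX+H)^{-1}$ and $x_i=X^Te_i$. The one structural fact I will use over and over is that $X^TDX\succeq 0$ forces $X^TDX+H\succeq H$, hence $\|A\|_{op}\le\|H^{-1}\|_{op}$; combined with $A\succeq 0$ this already gives $0\le F(X)=\tr[A]\le p\|H^{-1}\|_{op}$. For the pointwise bound $0\le f(X)\le1$, non-negativity is immediate from $D_{ii}\ge0$ and $A\succeq0$, while for the upper bound I would keep only the $k=i$ term of $X^TDX=\sum_k D_{kk}x_kx_k^T$ to get $X^TDX+H\succeq D_{ii}x_ix_i^T+H$, invert this comparison so that $x_i^TAx_i\le x_i^T(H+D_{ii}x_ix_i^T)^{-1}x_i$, and evaluate the right-hand side by the Sherman--Morrison formula, obtaining $f(X)\le D_{ii}x_i^TH^{-1}x_i/(1+D_{ii}x_i^TH^{-1}x_i)<1$. (This is the same comparison used in \eqref{above}, here with $H$ playing the role of $n\mu\Sigma$.)

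Next I would differentiate. From $\tfrac{d}{dt}(X^TDX+H)=\dot X^TDX+X^TD\dot X$ one gets $\dot A=-A(\dot X^TDX+X^TD\dot X)A$, and together with $\tfrac{d}{dt}x_i=\dot X^Te_i$ a direct computation produces $\tfrac{d}{dt}F=-2\tr[A^2X^TD\dot X]=-2\langle DXA^2,\dot X\rangle_F$ and the analogous formula for $\tfrac{d}{dt}f$, which after dividing by $K'D_{ii}$ is \eqref{grad_h}. To bound $\|F\|_{lip}$ I would write $\|DXA^2\|_F\le\|DXA^{1/2}\|_F\|A\|_{op}^{3/2}$ and note that $\|DXA^{1/2}\|_F^2=\tr[A^{1/2}X^TD^2XA^{1/2}]\le\tr[A^{1/2}X^TDXA^{1/2}]=\tr[AX^TDX]\le\tr[A(X^TDX+H)]=p$, using $D^2\preceq D$ (entries in $[0,1]$) and $H\succeq0$; this gives $\|F\|_{lip}\le2\sqrt p\,\|H^{-1}\|_{op}^{3/2}$. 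The bound $\|f\|_{lip}\le4\|H^{-1}\|_{op}^{1/2}$ is the delicate point: one must resist bounding $\|x_i\|_2$ crudely, and instead peel off, from each of the two terms of $\tfrac{d}{dt}f$, a factor $D_{ii}\sqrt{x_i^TAx_i}=\sqrt{D_{ii}}\sqrt{f(X)}\le1$ (in the term coming from $\tfrac{d}{dt}x_i$) or $D_{ii}x_i^TAx_i=f(X)\le1$ (in the term coming from $\dot A$, after using the identity $u^T(X^TDX+H)u=x_i^TAx_i$ with $u=Ax_i$ to control $\|D^{1/2}Xu\|_2$ and $\|D\|_{op}\le1$), leaving just $2\|A\|_{op}^{1/2}\|\dot X\|_F$ per term. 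The claim for $X\mapsto F(X)(1-f(X))$ is then the Lipschitz product rule $\|FG\|_{lip}\le\|F\|_{lip}\|G\|_\infty+\|G\|_{lip}\|F\|_\infty$ with $G=1-f$, $\|G\|_\infty\le1$ and $\|F\|_\infty\le p\|H^{-1}\|_{op}$, which gives exactly $K=\|f\|_{lip}\,p\|H^{-1}\|_{op}+\|F\|_{lip}\le 4p\|H^{-1}\|_{op}^{3/2}+2\sqrt p\,\|H^{-1}\|_{op}^{3/2}$.

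Finally, for the directional bound on $h$ I would re-estimate \eqref{grad_h} in operator norm, this time keeping $\|x_i\|_2$ explicit: by Cauchy--Schwarz and $\|A\|_{op}\le\|H^{-1}\|_{op}$ the first term is at most $2\|H^{-1}\|_{op}\|x_i\|_2\|\dot X\|_{op}$, and the second, again via $u^T(X^TDX+H)u=x_i^TAx_i$, $\|D\|_{op}\le1$ and $x_i^TAx_i\le\|A\|_{op}\|x_i\|_2^2$, is at most $2\|H^{-1}\|_{op}^{3/2}\|x_i\|_2^2\|\dot X\|_{op}$. Dividing by $K'=\|H^{-1}\|_{op}\sqrt p\max\{1,\|H^{-1}\|_{op}^{1/2}\sqrt p\}$, the linear-in-$\|x_i\|_2$ part is at most $2\|x_i\|_2/\sqrt p\le 1+\|x_i\|_2^2/p$ by the arithmetic--geometric inequality, while the quadratic part is at most $2\|x_i\|_2^2/p$ because $\max\{1,\|H^{-1}\|_{op}^{1/2}\sqrt p\}\ge\|H^{-1}\|_{op}^{1/2}\sqrt p$; adding these yields the claimed $1+3\|x_i\|_2^2/p$. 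I expect the two genuinely delicate points to be (i) extracting the $D_{ii}$ factors so that the Lipschitz constant of $f$ is free of $\|x_i\|_2$ — which is precisely what makes the subsequent Gaussian concentration in \Cref{thm:proba} clean — and (ii) the bookkeeping in this last step, where the normalization $K'$ is chosen exactly so that the linear and quadratic terms in $\|x_i\|_2$ simultaneously collapse onto $1+3\|x_i\|_2^2/p$.
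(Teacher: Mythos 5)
Your proof is correct and follows essentially the same route as the paper's: compute the directional derivatives of $f$ and $F$, control them using the basic bounds $\|A\|_{op}\le\|H^{-1}\|_{op}$, $\|D^{1/2}XA^{1/2}\|_{op}\le1$ and $\|D^{1/2}XA\|_{op}\le\|H^{-1}\|_{op}^{1/2}$ coming from $X^TDX+H\succeq H$ and $X^TDX+H\succeq X^TDX$, and combine via the Lipschitz product rule. Your minor variations (bounding $\|DXA^{1/2}\|_F^2\le\tr[A(X^TDX+H)]=p$ instead of pulling out $\sqrt{p}\,\|D^{1/2}XA\|_{op}\|A\|_{op}$, and the Sherman--Morrison argument for $f\le1$) are equivalent in substance, and you correctly supply the final $K'$ bookkeeping that the paper states without detail.
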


\begin{proof}
    Let $A=(X^TDX + H)^{-1}$. Then the bounds
    \begin{equation}
        \label{bounds_op_norm}
    \|A\|_{op}
    \le \|H^{-1}\|_{op},
    ~~
    \|D^{1/2}XAX^TD^{1/2}\|_{op}
    \le 1,
    ~~
    \|D^{1/2}XA\|_{op}
    \le\|H^{-1}\|_{op}^{1/2}
    \end{equation}
    hold due to the definition of $A$.
    For $F$, by differentiating, for any direction $\dot X\in\R^{n\times p}$
    $$
    \frac{d}{dt} F(X+t \dot X)
    \Big|_{t=0}
    =
    \tr\Bigl[-
    A(\dot X^TD X + X^TD\dot X)A
    \Bigr]
    \le
    2 \|D^{1/2}\dot X\|_F
    \|D^{1/2}XA^2\|_{F}
    $$
    and $\|D^{1/2}XA^2\|_{F}\le \sqrt p \|D^{1/2}XA\|_{op} \|A\|_{op}
    \le \sqrt p \|H^{-1}\|_{op}^{3/2}$.
    For $f$, again by differentiating,
    $$
    \frac{d}{dt} f(X+t \dot X)
    \Big|_{t=0}
    =
    2 D_{ii} e_i^T\dot X  A X^Te_i
    -
    D_{ii}e_i^T XA (X^TD\dot X + \dot X^T DX) A X^Te_i.
    $$
    The first term is bounded from above
    by $$2\|D^{1/2}\dot X\|_{op} \|AX^TD^{1/2}\|_{op}
    \le 2\|D^{1/2}\dot X\|_{op} \|H^{-1}\|_{op}^{1/2}.$$
    The second term is bounded from above by
    $$
    2 \|D^{1/2}XAX^TD^{1/2}\|_{op} \|D^{1/2}\dot X\|_{op}\|AX^TD^{1/2}\|_{op}
    \le 2 \|D^{1/2}\dot X\|_{op} \|H^{-1}\|_{op}^{1/2}.
    $$
    The final claim \eqref{final_claim_mapsto}
    follows from the chain rule.
\end{proof}

We are now ready to prove
\Cref{thm:proba}.

\begin{proof}[Proof of \Cref{thm:proba}]
    Let us first treat the case $\Sigma=I_p$.
    The case $\Sigma\ne I_p$ will be handled later
    by a change of variable argument.
    The matrix $QXP$ is independent of $E$,
    and conditionally on $E$
    the matrix $QXP$ has jointly normal entries with mean 0.
    An explicit representation is
    \begin{equation}
    X = 
    QX + \bar V
    =
    QXP + Q\bar U + \bar V,
    \label{representation}
    \end{equation}
    so that conditionally on $E$, all the randomness comes
    from the first term $QXP$.

    By \Cref{prop:concentre}, conditionally on $E$ we have
    that $\tr[A\Sigma](1-D_{ii} x_i^TAx_i)$ is a $K$-Lipschitz function
    of $QXP$. Since $QXP$ is the image by a linear transformation
    of operator norm 1 of a standard normal vector,
    the concentration of Lipschitz functions of
    standard normal vector
    \cite[Theorem 5.6]{boucheron2013concentration}
    gives
    $$
    \mathbb P\Bigl(
    \Big|
    \tr[A\Sigma](1-D_{ii} x_i^TAx_i)
    -\E\Bigl[\tr[A\Sigma](1-D_{ii} x_i^TAx_i) \mid E\Bigr]
    \Big| > t K
    \Bigr) \le 2\exp(-t^2/2).
    $$
    For the other term in $\Rem_i$, namely
    $x_i^TAx_i$, we are not above at this point to get 
    as good concentration as for the first term. 
    Denote 
    $h(X) = \frac{x_i^TAx_i}{K'}$ for $K'$ as in \Cref{prop:concentre}.
    By \cite[Theorem 1.7.1]{bogachev1998gaussian}, for any nondecreasing
    convex function
    $\phi$ we have
    \begin{align*}
    &\mathbb P(h(X) - \E[h(X)\mid E] > t \mid E)
    \\&\le
    \mathbb P(\phi(h(X) - \E[h(X)\mid E]) > \phi(t) \mid E)
      &&\text{(monotonicity)}
  \\&\le
    \phi(t)^{-1} \E[\phi(h(X) - \E[h(X)\mid E]) \mid E]
    &&\text{(Markov's ineq.)}
  \\&\le
    \phi(t)^{-1} \E[\phi(\tfrac{\pi}{2} \tfrac{d}{dt} h(X+t\dot X)\big|_{t=0} )\mid E]
    &&\text{(Theorem 1.7.1 in \cite{bogachev1998gaussian})}
    \end{align*}
    where, conditionally on $E$, $\dot X$ is a independent copy
    of $X-\E[X\mid E]$; that is, from \eqref{representation}
    $\dot X=Q\tilde X P$ where $\tilde X$
    has iid $N(0,1)$ entries independent of everything else.
    The expression
    $\tfrac{d}{dt} h(X+t\dot X)\big|_{t=0}$ is given by
    \eqref{grad_h}.
    By independence and normality of $\dot X$, for some $Z\sim N(0,1)$
    we have
    $\tfrac{d}{dt} h(X+t\dot X)\big|_{t=0}
    = Z N$
    where $N$ is the L2 norm of the corresponding gradient
    at $X$, which is bounded from above by $1+3\|x_i\|_2^2/p$
    thanks to \Cref{prop:concentre}.
    We have established that for any nondecreasing convex $\phi:\R\to\R$,
    $$
    \mathbb P(h(X) - \E[h(X)\mid E] > t \mid E)
    \le \phi(t)^{-1}
    \E[\phi(\tfrac{\pi}{2} |Z| (1+3\|x_i\|_2^2/p)) \mid E]
    $$
    with $Z\sim N(0,1)$. For simplicity, let us separate
    $|Z|$ from $\|x_i\|_2^2$ in the right-hand side
    using Young's inequality
    $$
    |Z|(1+3\|x_i\|_2^2/p)
    \le |Z| + 3(|Z|^r/r + (\|x_i\|_2^2/p)^q/q
    $$
    where $r>1, q>1$ and $1/r + 1/q = 1$ and $r=2q$, which gives $(1/2 + 1)/q = 1$
    so $q=3/2$ and $r=3$. By convexity of $\phi$ and an average of 3,
    we have
    $$
    \E[\phi(\tfrac{\pi}{2} |Z| (1+3\|x_i\|_2^2/p)) \mid E]
    \le
    c \E\Bigl[\phi(c |Z|) 
    +
    \phi(c |Z|^3)
    +
    \phi(c (\|x_i\|_2^2/p)^{3/2})
    \mid E\Bigr]
    $$
    for some absolute constant $c>0$.
    Now take the unconditional expectation. We need to find
    a convex, nondecreasing $\phi$ such that the right-hand side
    is bounded by a numerical constant.
    Choose $\phi(u) = \exp(\frac{3}{8}\max(1, \sign(u) |u/c|^{2/3})) )$.
    For the second term, we are left with
    $$
    \E\exp(\tfrac{3}{8} \max(1, |Z|^2))
    \le \exp(\tfrac38) \E\exp(\tfrac38 |Z|^2) = \exp(\tfrac38) 2
    $$
    thanks to $\E\exp(\tfrac38 |Z|^2)=2$ by the explicit formula
    for the moment generating function of a chi-square random variable.
    The first term $\phi(c|Z|)$ is bounded similarly.
    For the third term, we have thanks to Jensen's inequality
    for the average over $[p]$
    $$
    \E\exp(
    \tfrac38
    \max(1, \|x_i\|_2^2/p)
    )
    \le e^{\tfrac38} \E\exp(\tfrac38 \|x_i\|_2^2/p)
    \le e^{\tfrac38}\sum_{j=1}^p \E\exp(\tfrac38 x_{ij}^2)
    = e^{\tfrac38} 2.
    $$
    This completes the proof that for any $t\ge 1$,
    for some absolute constant $C>0$,
    $$
    \mathbb P(h(X) - \E[h(X)\mid E] > t)
    \le \phi(t)^{-1} C = C \exp(-(t/c)^{2/3} \tfrac38)
    $$
    and the proof of \eqref{eq:concentre} is complete.

    One version of Stein's formula says that 
    if $(g_k)_{k\in [K]}$ and $g$ are jointly normal
    random variables with mean zero, then for any Lipschitz
    function $f:\R^K\to\R$ we have
    $$
    \E[(g-\E[g]) f(g_1,\ldots,g_K)]
    = \sum_{k=1}^K \E\Bigl[(g_k-\E[g_k](g-\E[g])\Bigr] \E\Bigl[\frac{\partial}{\partial g_k} f(g_1,...,g_k)\Bigr].
    $$
    With $\tilde{\mathbb E}$ denoting the conditional
    expectation given $E$, we have
    with $g=(e_i^TQXPe_j)$
    $$
    \tE\Bigl[(e_i^TQXPe_j) e_j^T Ax_i\Bigr]
    =\sum_{l=1}^n
    \sum_{k=1}^p
    \tE\Bigl[(e_i^TQXPe_j)(x_{lk}-\tE[x_{lk}])\Bigr]
    \tE\Bigl[
    \frac{\partial}{\partial x_{lk}} e_j^T Ax_i
    \Bigr].
    $$
    For the covariance term on the right, 
    $$
    \tE[
        (e_i^TQXPe_j)(x_{lk}-\tE[x_{lk}])
    ]
    = e_i^TQe_l e_j^TPe_k
    = Q_{il} P_{j k}.
    $$
    For the term involving the derivative on the right,
    $$
    \frac{\partial}{\partial x_{lk}} e_j^T Ax_i
    =
    e_j^T A e_k I\{i=l\}
    -
    e_j^T A\Bigl(
    e_ke_l^T D X
    + X^TD e_l e_k^T
    \Bigr)A x_i.
    $$
    Thus after summing over $j\in[p]$ we obtain
    $$
    \tE\Bigl[
    e_i^T QXP A x_i
    \Bigr]
    =
    \tE\Bigl[
    \tr[A P] e_i^TQ e_i
    -
    \tr[AP] e_i^T Q D X A x_i
    -
    x_i^T A P AX^T DQe_i
    \Bigr].
    $$
    This is the first step to show that the expectation
    $\tE[\Rem_i]$ is small, although there are some extra
    terms due to the presence of the rightmost term above,
    and the matrices $Q$ and $P$. We now show that these extra 
    terms are negligible.
    \begin{align*}
    \tE[
    \Rem_i
    ]
    &=
    \tE\Bigl[
    e_i^T(X - QXP)A x_i
    +
    e_i^TQXPA x_i
    \Bigr]
    -
    \tE\Bigl[\tr[A](1-D_{ii} x_i^TAx_i)\Bigr]
  \\&=
    \tE\Bigl[
    e_i^T(X - QXP)A x_i
    \Bigr]
    + \tE\Bigl[x_i^T A PAX^TD Q e_i\Bigr]
  \\&\quad\quad
    +
    \tE\Bigl[\tr[A P] e_i^TQ e_i - \tr[A]\Bigr]
    +
    \tE\Bigl[
    e_i^T\Bigl(\tr[A] - Q\tr[AP]\Bigr) DXAx_i
    \Bigr].
  \\&\coloneq \tE[A_i + B_i + C_i + E_i].
    \end{align*}
    We now bound the four terms on the right-hand side
    using linear algebra. Since
    the sum of squares of diagonal elements of a matrix
    is bounded by its squared Frobenius norm, we have
    with $S=
    \sum_{i=1}^n
    A_i^2+B_i^2+C_i^2+E_i^2$ that
    \begin{align*}
    S
    &\le
    \|(X-QXP)AX^T\|_F^2
    + \|XAPAX^TDQ\|_F^2
    \\
    &\qquad +\|\tr[AP]Q - \tr[A] I_n\|_F^2
    +\|(\tr[A] I_n - \tr[AP]Q)DXAX^T\|_F^2
  \\&\le
    \|(X-QXP)AX^T\|_F^2
    + \|XAPAX^TD \|_F^2
    +
    \|\tr[AP]Q - \tr[A] I_n\|_F^2
    (1 + \|DXAX^T\|_{op}^2).
    \end{align*}
    thanks to $\|MM'\|_F\le\|M\|_F\|M\|_{op}$ that we will use
    repeatedly.
    Using $\|DXA^{1/2}\|_{op}\le 1$ from \eqref{bounds_op_norm}
    where possible,
    \begin{align}
        \label{upper_bound_S}
        S&\le
    \|X-QXP\|_F^2\|XA\|_{op}^2
    +\|XA\|_{op}^2\|A\|_{op}
    +\|\tr[AP]Q - \tr[A] I_n\|_F^2
    (1+
    \|A^{1/2}X^T\|_{op}^2
    ).
    \end{align}
    Since $X-QXP = (I_n-Q)X +QX(I_p - P)$,
    the rank of $X-QXP$ is at most $d_P + d_Q$ hence
    \begin{align*}
        \|X-QXP\|_F^2
        &\le (d_P + d_Q) \|X-QXP\|_{op}^2
      \\&\le (d_P + d_Q) (2\|X\|_{op}^2 + 2\|QXP\|_{op}^2)
    \le 4(d_P + d_Q)\|X\|_{op}^2.
    \end{align*}
    For the last term in the upper bound on $S$,
    \begin{align*}
    \tr[A] I_n - Q \tr[AP]
    &= \tr[A](I_n - Q) + Q(\tr[A] - \tr[AP]),
    \\
    \|\tr[A] I_n - Q \tr[AP]\|_F
      &\le
     \|\tr[A](I_n - Q)\|_F +
     \|Q(\tr[A] - \tr[AP])\|_F.
    \end{align*}
    For the second term, simply use $\|Q\|_F\le \sqrt n$
    so that
    $\|Q(\tr[A] - \tr[AP])\|_F = \sqrt n \tr[A(I_p - P)]
    \le\sqrt n \|A\|_{op} d_P$.
    The first term on the other hand
    is $\tr[A] \sqrt{d_Q}$.
    We have thus established the following upper bound on $S$:
    $$
    S \le 
    \Bigl(4(d_P+d_Q)\|X\|_{op}^2 + \|A\|_{op}^2\Bigr)\|XA\|_{op}^2
    + \Bigl(
        2n \|A\|_{op}^2 d_P^2 
        +  2 \tr[A]^2 d_Q
    \Bigr)(1+\|A^{1/2}X^T\|_{op}^2).
    $$
    We use $\|A\|_{op}\le \|H^{-1}\|_{op}\le (n\mu)^{-1}$ and
    $\E[\|X\|_{op}^4]\le c (n^2 + p^2)$ for some numerical constant
    $c$ if $X$ has iid $N(0,I_p)$ rows
    (by integrating the tail bounds in \cite[Theorem 2.13]{DavidsonS01} or \cite[Corollary 7.3.3]{vershynin2018high} for instance).
    Combining the pieces completes the proof
    in the case $\Sigma=I_p$.

    The case $\Sigma\ne I_p$ is obtained by considering
    $\tilde X = X\Sigma^{-1/2}$,
    $\tilde Q = Q$,
    $\tilde P = \Sigma^{1/2} P \Sigma^{-1/2}$
    and $\tilde A = \Sigma^{1/2} A \Sigma^{1/2}$,
    and applying the isotropic result to
    $\tilde X,\tilde Q,\tilde P,\tilde A$.

\end{proof}

\section{Alternative argument to control $\sum_{i=1}^n\E[\Rem_i^2]$}
\label{appendix_alternative}

With the notation of the proof in the previous appendix,
let us mention an alternative argument to control from above
$\sum_{i=1}^n \tE[\Rem_i^2]$ directly, without establishing
a concentration of the form \eqref{eq:concentre}.
Assume $\Sigma=I_p$ and the setting of the proof of
\Cref{thm:proba} in the previous appendix.
The inequality given after (2.7) in \cite{bellec_zhang2018second_stein}
states that if $z\sim N(m, \Lambda)$ in $\R^q$ and $f:\R^q\to\R^q$
is locally Lipschitz then
$$
\E[((z-m)^Tf(z) - \tr[\Lambda \nabla f(z)])^2]
\le \E[ \|\Lambda^{1/2}f(z)\|_2^2 + \|\Lambda^{1/2}\nabla f(z)\Lambda^{1/2}\|_F^2].
$$
Here, take $q=np$, $z$ to be the vectorization of
$X = QXP + Q\bar U + \bar V$, and the function
$f$ being the vectorization of 
$f^i(X) =  e_ix_i^T A$.
Then $\Lambda$ above is the Kronecker product of $Q$ and $P$
in $\R^{np \times np}$,
which has operator norm at most one so that it can be omitted
in the above upper bound.
This gives
$(z-m)^Tf(z) = e_i^T QXP A x_i = \tr[f^i(X)^T QXP]$ 
as well as
$$
\tr[\Lambda \nabla f(z)]=
    \tr[A P] e_i^TQ e_i
    +
    \tr[AP] e_i^T Q D X A x_i
    +
    x_i^T A P AX^T DQe_i
,$$
and with the definition
$$
\Rem_i'\coloneq
    e_i^T QXP A x_i
    -
    \tr[A P] e_i^TQ e_i
    +
    \tr[AP] e_i^T Q D X A x_i
    +
    x_i^T A P AX^T DQe_i,
$$
we get
\begin{equation*}
\tE\Bigl[\Bigl(
\Rem_i'
\Bigr)^2\Bigr]
\le
\tE\Bigl[
\|f^i(X)\|_F^2
+
\sum_{j=1}^p
\sum_{l=1}^n
\|\frac{\partial}{\partial x_{lj}} f^i(X)\|_F^2
\Bigr]
.
\end{equation*}
Summing over $i\in[n]$, the right-hand side and using the definition
of $f^i$,
$$
\sum_{i=1}^n\tE\Bigl[(\Rem_i')^2\Bigr]
\le 
\tE\Bigl[
\|AX\|_F^2
+
\sum_{j=1}^p
\sum_{l=1}^n
\|\frac{\partial}{\partial x_{lj}} 
(AX^T)\|_F^2
\Bigr].
$$
We have
$
\frac{\partial}{\partial x_{lj}} 
(AX^T)
= Ae_j e_l^T
- A(e_j e_l^T D X + X^T D e_l e_j^T)AX^T
$ by direct differentiation, hence
\begin{align*}
\frac13
\sum_{j=1}^p
\sum_{l=1}^n
\|\frac{\partial(AX^T)}{\partial x_{lj}} 
\|_F^2
&\le 
\sum_{j=1}^p
\sum_{l=1}^n
\|Ae_j\|_2^2
+
\|Ae_j\|_2^2
\|e_l^TDXAX^T\|_2^2
+ \|AX^TDe_l\|_2^2 \|e_j^TAX^T\|_2^2
\\&
= n\|A\|_F^2
+ \|A\|_F^2 \|DXAX^T\|_F^2
+ \|AX^TD\|_F^2 \|AX^T\|_F^2
\\&
\le  n\|A\|_F^2
+ \|A\|_F^2 \|A^{1/2}X^T\|_F^2
+ \|A^{1/2}\|_F^2 \|AX^T\|_F^2
\\&\le  n\|A\|_F^2
+ \|A\|_F^2 \|A\|_{op} \|X^T\|_F^2
+ \tr[A] \|A\|_{op}^2 \|X^T\|_F^2
\end{align*}
thanks to $\|D^{1/2}XA^{1/2}\|_{op}\le 1$ for the last inequality.
We have $\|A\|_{op}\le (n\mu)^{-1}$ so that $\|A\|_F^2 \le p /(n\mu)^2$
and $\tr[A] \le p/(n\mu)$ and $\E[\|X\|_F^2] = np$, so that
the right-hand side is bounded from above in unconditional expectation
by
$\delta^{-1}/\mu^2
+ 2\delta^{-2}/\mu^3$.
Since $\sum_{i=1}^n\E[(\Rem_i - \Rem_i')^2]$
has already been controlled by $C(\delta, \mu)$ in
\eqref{upper_bound_S}, this proves
$\E[\sum_{i=1}^n\Rem_i^2]\le C(\delta , \mu)$.
Together with \eqref{weight_bounded_from_1}, this provides
an alternative technique which shows that the expectation
of the left-hand side of \eqref{robust_regression_weight}
is smaller than $C(\delta,\mu)/n$, without log factors.

\bibliography{../bibliography/db}
\bibliographystyle{plainnat}

\end{document}